\documentclass[a4paper,11pt]{amsart}
\usepackage[margin=2cm]{geometry}
\usepackage[utf8]{inputenc}
\usepackage[english]{babel}
\usepackage{amssymb, stmaryrd}
\usepackage{amsmath}
\usepackage{amsthm}
\usepackage{amsfonts}
\usepackage{array}
\usepackage{comment}
\usepackage{enumitem}
\usepackage{hyperref}
\usepackage{mathrsfs}
\usepackage{tikz}
\usepackage{tikz-cd, mathtools}
\usepackage{mathabx}
\usepackage{xfrac}
\usepackage{graphicx} % Required for inserting images
\usepackage[mathscr]{euscript}

\theoremstyle{plain}
\newtheorem{theorem}{Theorem}[section]
\newtheorem{lemma}[theorem]{Lemma}

\newtheorem{proposition}[theorem]{Proposition}
\newtheorem{corollary}[theorem]{Corollary}

\theoremstyle{definition}

\newtheorem{example}[theorem]{Example}
\newtheorem{remark}[theorem]{Remark}

\newcommand{\A}{\mathbb{A}}
\newcommand{\N}{\mathbb{N}}
\newcommand{\C}{\mathbb{C}}

\newcommand{\CP}{\mathscr{P}}

\newcommand{\CT}{\mathcal{T}} % family polytope al

\newcommand{\pa}{T} % polytope algebra

\newcommand{\te}{\widetilde{e}}
\newcommand{\tf}{\widetilde{f}}
\renewcommand{\P}{\mathbb{P}}

\newcommand{\R}{\mathbb{R}}
\newcommand{\Z}{\mathbb{Z}}

\DeclareMathOperator{\cha}{char}
\DeclareMathOperator{\SYT}{SYT}

\DeclareMathOperator{\Spec}{Spec}
\DeclareMathOperator{\Proj}{Proj}

\DeclareMathOperator{\maj}{maj}
\DeclareMathOperator{\des}{des}
\DeclareMathOperator{\bij}{Bij}
\DeclareMathOperator{\Des}{Des}
\newcommand{\ba}{{\bf a}}
\newcommand{\bv}{{\bf v}}

\title[The projective coinvariant algebra]{The projective coinvariant algebra, Young invariants and \\ bigraded coordinate rings of Segre embeddings}
\author{Bal\'{a}zs Szendr\H{o}i}
\address{University of Vienna, Austria}
\email{balazs.szendroi@univie.ac.at}

\begin{document}

\begin{abstract}\noindent This paper studies a flat degeneration~$P_n$ of the classical coinvariant algebra~$R_n$, a bigraded Artinian Gorenstein algebra that arises from the coordinate ring of the Segre embedding of the $n$-fold self-product of the projective line. The Frobenius character of $P_n$ is computed by a natural bigraded refinement of the classical Lusztig--Stanley formula for the character of the coinvariant algebra. Young invariants in~$P_n$ get related to coordinate rings of general Segre embeddings of products of projective spaces; their bigraded Hilbert polynomials get expressed in terms of major-descent generating functions of words in multisets. Relations to the diagonal coinvariant algebra, cohomological interpretations including quantum cohomology, and Garsia--Stanton--style bases are also explored. 
\end{abstract}
\maketitle

\thispagestyle{empty}

\section{Introduction}

Given a positive integer $n$, fixed for the remainder of the paper, let $S_n$ denote the symmetric group on $\{1,\ldots, n\}$. The group $S_n$ acts on the free polynomial ring $\C[u_1\,\ldots, u_n]$ in the standard way by permuting the indices. 
The classical {\em coinvariant algebra}
\[ R_n = \C[u_1\,\ldots, u_n]/\langle \C[u_1\,\ldots, u_n]^{S_n}_+\rangle
\]
is a well-studied Artinian graded complete intersection $\C$-algebra of dimension $n!$ and Hilbert series 
\begin{equation}
 \label{eq:Rnqdim}   
\dim_{q} R_n  = \sum_{\sigma\in S_n} q^{\mathrm{maj}(\sigma)} = [n]_q!, 
\end{equation}
with $\mathrm{maj}$ the major index statistic on permutations of $n$ (see~\ref{subsec:conv}-\ref{subsec:multi} for all 
undefined terms). The Garsia--Stanton $\C$-basis $\{\mathbf{b}_\sigma\colon \sigma\in S_n\}\subset R_n$ from~\cite{GS} witnesses formula~\eqref{eq:Rnqdim} in the sense that it associates to every permutation $\sigma\in S_n$ a homogeneous basis element $\mathbf{b}_\sigma$ of graded degree $\maj(\sigma)$. 
Further, the algebra~$R_n$ carries a graded action of the symmetric group $S_n$, which makes it isomorphic to the regular representation of $S_n$ as ungraded representation. Its graded (Frobenius) character is computed by the Lusztig--Stanley formula~\cite[Prop. 4.11]{Stan}
\begin{equation} \label{eq:StanLusz}\cha_{q} R_n = \sum_{\lambda\vdash n} \left(\sum_{T\in\SYT(\lambda)} q^{\mathrm{maj}(T)}\right)s_\lambda,
\end{equation}
which expresses the $q$-multiplicity of the irreducible representation $s_\lambda$ of $S_n$ in $R_n$ as a sum over standard Young tableaux (see~\ref{subsec:SYT}) of shape equal to a given partition $\lambda\vdash n$. 

Given a composition $\alpha=(\alpha_1, \ldots, \alpha_k) \vDash n$ of $n$, an additive decomposition of $n$ into positive integers, there is a corresponding Young subgroup  \[S_\alpha=S_{\alpha_1} \times \ldots \times S_{\alpha_k}\subset S_n.\]
The action of the symmetric group $S_n$ on the graded algebra $R_n$ restricts to an action of $S_\alpha$. The invariant algebra
\[ R_\alpha = R_n^{S_\alpha}, 
\]
the {\em partial coinvariant algebra}~\cite[Sect.~5]{BGG}, \cite[Sect.~2]{MR}, is a graded finite-dimensional $\C$-algebra with Hilbert series
\begin{equation}\label{eq:dimqRalpha} \dim_q R_\alpha = \left[\!\!\begin{array}{c}n\\\alpha\end{array}\!\!\right]_q,
\end{equation}
a $q$-multinomial coefficient~\cite[Lemma 2.1]{Br}. 

In this paper, I study a certain bigraded degeneration of the above setup, related to Segre embeddings of products of projective spaces. The main object is a certain bigraded algebra $P_n$ that I will call the {\em projective coinvariant algebra}, which was introduced independently at least three times
in the literature in~\cite{ORS, BO, AS}, but in a way that I feel has not uncovered all its remarkable properties. 
The following set of results will be discussed. 
\begin{enumerate}
\item[(a)] (Proposition~\ref{prop_reg}) The algebra $P_n$ is defined as a certain bigraded, Artinian truncation of the coordinate ring
of the $n$-fold product of the projective line $\C[(\P^1)^n]$ in its standard Segre embedding. It is a bigraded, 
finite-dimensional Gorenstein algebra of dimension $n!$ and bigraded Hilbert series
\begin{equation}\label{eq:Pntqdim}
\dim_{t,q} P_n = \sum_{\sigma\in S_n} t^{\mathrm{des}(\sigma)}q^{\mathrm{maj}(\sigma)}, 
\end{equation}
with $\mathrm{des}$ the descent statistic on $S_n$. 

\item[(b)] (Theorem~\ref{thm_main_def}) The algebra $P_n$ is the central fibre of a natural two-dimensional family $\CP_n$ of algebras over the base $\C^2$, 
which carries an action of the torus $(\C^*)^2$ that is compatible with its standard action on the base. Its bigrading
arises from being a fibre over the torus-fixed point $(0,0)\in\C^2$. Over points $(s_0,0), (0,s_1)\in\C^2$ for $s\neq0$, 
the fibres of the family $\CP_n$ are isomorphic to the standard coinvariant algebra $R_n$, with its grading arising from 
the one-dimensional subtorus $\C^*\subset(\C^*)^2$ fixing such a point in the base. Over general points 
$(s_0,s_1)\in\C^2$ with $s_0,s_1\neq0$, the fibres of the family $\CP_n$ are isomorphic to the group algebra $\C S_n$ as ungraded
algebras. 

\item[(c)] (Theorem~\ref{thm:char}) The algebra $P_n$ is a bigraded $S_n$-module with Frobenius character
\begin{equation}\label{eq:Pntqchar}
\cha_{t,q} P_n  =   \sum_{\lambda\vdash n} \left(\sum_{T\in\SYT(\lambda)} t^{\mathrm{des}(T)}q^{\mathrm{maj}(T)}\right)s_\lambda.
\end{equation}

\item[(d)] (Theorem~\ref{thm:invariantisPalpha}(i)-(ii)) The invariant subalgebra $P_\alpha = (P_n)^{S_\alpha}$ under the action of a Young subgroup $S_\alpha\subset S_n$ can be identified
with a certain Artinian truncation of the coordinate ring of a general product of projective
spaces $\C[\P^{\alpha_1}\times\ldots\times\P^{\alpha_k}]$ in its Segre embedding. It is a bigraded, Artinian algebra with bigraded Hilbert series
\begin{equation}\label{eq:Patqdim}
\dim_{t,q}  P_\alpha=  \sum_{w\in W_\alpha} q^{\maj(w)}t^{\des(w)},    
\end{equation}
where $W_\alpha$ is a certain set of words in a multiset associated to $\alpha$, equipped with its standard
descent and major statistics (once again, see Section~\ref{subsec:multi}). 

\item[(e)] (Theorem~\ref{thm:invariantisPalpha}(iii)-(iv)) 
The algebra $P_\alpha$ also has a family of equivariant deformations over the base $\C^2$, with other fibres being isomorphic to 
the partial coinvariant algebra $R_\alpha$, respectively a finite-dimensional ungraded algebra of dimension
equal to the multinomial coefficient ${n\choose \alpha}$.  
\end{enumerate}

The degeneration picture of~(b) explains in particular how the two-parameter expressions \eqref{eq:Pntqdim}-\eqref{eq:Pntqchar} specialize at $t=1$ to the one-parameter expressions~\eqref{eq:Rnqdim}-\eqref{eq:StanLusz}, 
which then further specialise at $q=1$ to $\dim \C S_n = n!$, respectively the standard decomposition of the regular representation $\C S_n$
into irreducibles. On the level of (graded) dimensions, similar comments apply to the case of $S_\alpha$-invariants. 

As an intermediate step in the argument, given a composition $\alpha=(\alpha_1, \ldots, \alpha_k) \vDash n$ of $n$, I establish in Theorem~\ref{thm:bigradedPa} the formula
\[
\dim_{t,q}  \C[\P^{\alpha_1}\times\ldots\times\P^{\alpha_k}]=  \frac{ \displaystyle\sum_{w\in W_\alpha} q^{\maj(w)}t^{\des(w)}}{ \displaystyle\prod_{j=0}^n (1-tq^j)}
\]
for the bigraded Hilbert series of the projective coordinate ring of a general product of projective spaces (in the standard
Segre embedding). This refines in a combinatorial manner a formula (Corollary~\ref{cor:nemcomb}) for the classical Hilbert series (involving only the variable $t$, with $q=1$) that appears in the literature already~\cite{morales}, and is also implicitly proven in~\cite{ORS}. 

Further connections between the constructions of this paper and earlier work include the following. 
\begin{enumerate}
\item[(f)]  It appeared plausible that there should be a connection between the projective coinvariant algebra $P_n$, 
and the Garsia--Haiman diagonal coinvariant algebra $D_n$ (Remark~\ref{rem:PnDn}). Indeed there is one, but it turns out 
to be trivial (Proposition~\ref{prop:PnDn}). A fundamental reason for this disappointing fact eludes me. 
\item[(g)] For $n=k\cdot m$, consider the composition $\alpha=(m,\ldots, m)\vDash n$. There is a residual action
of the symmetric group $S_k$ on the corresponding algebra $P_\alpha$. Inspired by an early version of this work, its bigraded character was recently computed in~\cite{LR}; the resulting nice formula is recalled in Theorem~\ref{thm:fabianetal}. 
\item[(h)] Borel's isomorphism is a graded isomorphism 
\[
R_n \cong H^*(F_n, \C)
\]
between the coinvariant algebra $R_n$ and the cohomology algebra
of the flag variety~$F_n$. For a partial coinvariant algebra, there is a graded isomorphism
\[R_\alpha \cong H^*(F_\alpha, \C)\]
with the cohomology algebra of a partial flag variety $F_\alpha$. In this context, the results presented above can be interpreted as providing simultaneously a (bigraded) degeneration and an (ungraded) deformation (smoothing) of the (graded) cohomology algebra $H^*(F_\alpha, \C)$ of any partial flag variety. 
For one-step flag varieties ($k=2$), the one-parameter smoothing can be identified with the standard quantum deformation of the cohomology of the corresponding flag variety (Proposition~\ref{prop:cohom} and the following discussion).  
I do not know whether the bigraded degenerations $P_n, P_\alpha$ can be interpreted cohomologically, but it would be really interesting if this were to be the case. (Section~\ref{sec:cohom}.) 
\item[(i)] In Section~\ref{sec:bases}, I recall $\C$-bases $\{\mathbf{a}_\sigma\colon\sigma\in S_n\}\subset P_n$, respectively 
$\{\mathbf{a}_w\colon\sigma\in W_\alpha\}\subset P_\alpha$, already constructed by~\cite{BO}, respectively by~\cite{RSW}, that witness formulae~\eqref{eq:Pntqdim}-\eqref{eq:Patqdim} in the same sense as the Garsia--Stanton basis witnesses~\eqref{eq:Rnqdim}. Using the deformation~(e), I deduce the existence of Garsia--Stanton-style bases in partial coinvariant algebras $R_\alpha$, which may be new. 
\end{enumerate}

\subsection*{Acknowledgements} I would like to thank Erik Carlsson, Ionut Ciocan-Fontanine, Iain Gordon, Tam\'as Hausel, Fabi\'an Levi\-c\'an, Jun Ma, Anton Mellit, Konstanze Rietsch, Marino Romero and Dmitriy Rumynin for discussions, comments and help with references, and Praise Adeyemo for an earlier collaboration which initiated this research. 

\section{Preliminaries}
\subsection{Conventions and notations} \label{subsec:conv} We work over the field $\C$. Fix a positive integer $n$.
Denote $[n]=\{1, \ldots, n\}$, and let
% and $\CP[n]$ the set of all its subsets. 
$S_n=\bij[n]$ be the symmetric group on $n$ letters. 
The $q$-integers are defined as usual by 
\[ [n]_q = \frac{q^n-1}{q-1}\]
and the $q$-factorial by
\[ [n]_q! = \prod_{j=1}^n [j]_q.\]
Given a non-negatively graded $\C$-algebra $R=\bigoplus_{d=0}^\infty R_d$ with finite-dimensional graded pieces, define its $q$-dimension (Hilbert series) by the formula
\[\dim_q R = \sum_{d=0}^\infty (\dim R_d) q^d\in{\mathbb N}\llbracket q\rrbracket.
\]
Similarly, for a bigraded algebra $R=\bigoplus_{d,e=0}^\infty R_{d,e}$, denote
\[\dim_{t,q} R = \sum_{d,e=0}^\infty (\dim R_{d,e}) t^dq^e \in{\mathbb N}\llbracket q,t\rrbracket.
\]
If $V$ is a finite-dimensional representation of the symmetric group $S_n$ with irreducible decomposition $V=\oplus_{\lambda\vdash n} V_\lambda$ indexed by partitions $\lambda\vdash n$ of~$n$, define its (Frobenius) character to be
\[\cha V = \sum_{\lambda\vdash n} (\dim V_\lambda)s_\lambda, 
\]
where the $s_\lambda$ are the standard (Schur) basis elements in the representation ring of $S_n$. When $V$ is a 
(bi)graded representation with finite-dimensional graded pieces, the two notations combine in the obvious way. We will sometimes
use standard plethystic notation to write compact expressions for Frobenius characters; see for example~\cite[Sect.3.8]{Ber}. 
For a partition $\lambda\vdash n$, $h_\lambda, m_\lambda$ denote the standard complete and monomial symmetric functions.

\subsection{Words in multisets and an identity of MacMahon} \label{subsec:multi}
Recall that a composition $\alpha \vDash n$ is an ordered sequence $\alpha=(\alpha_1, \ldots, \alpha_k)$ of positive integers summing to $n$, with no assumption on the sequence being arranged in decreasing order (as there would be for a partition). 
Given a composition $\alpha \vDash n$, let $M_\alpha=\{1^{\alpha_1}, \ldots, k^{\alpha_k}\}$ be the corresponding multiset (set with repetition of elements allowed). Let ${\mathcal P}(M_\alpha)$ denote the set of subsets of $M_\alpha$, the set of sub-multisets $\{1^{\beta_1}, \ldots, k^{\beta_k}\}$ of $M_\alpha$ with $0\leq \beta_i\leq\alpha_i$, of size $|{\mathcal P}(M_\alpha)|=\prod_{i=1}^k (\alpha_i+1)$. 

Let $W_\alpha$ be the set of full-length words $w=(i_1i_2\ldots i_n)$ in the multiset $M_\alpha$. The size of this set is 
\[|W_\alpha| = {n\choose \alpha} = \frac{n!}{\alpha_1!\cdot\ldots\cdot\alpha_k!},\]
a multinomial coefficient. For a word $w=(i_1i_2\ldots i_n)\in W_\alpha$, $1\leq k \leq n-1$ is a {\em descent position} if $i_k>i_{k+1}$. Let $\Des(w)$ denote the set of descent positions in $w$, and define the descent index $\des(w)$, respectively major index $\maj(w)$, to be the number, respectively sum, of descent positions in $w$:
\[ \des(w) = |\Des(w)|, \ \ \maj(w) = \sum_{i\in \Des(w)}i.\]
We get the generating function
\[
A_\alpha(t,q) = \sum_{w\in W_\alpha} q^{\maj(w)}t^{\des(w)}.
\]
The specialisation
\[
A_\alpha(1,q) =\left[\!\!\begin{array}{c}n\\\alpha\end{array}\!\!\right]_q = \frac{[n]_q!}{[\alpha_1]_q!\cdot\ldots\cdot[\alpha_k]_q!}
\]
is a $q$-multinomial coefficient. 

For $\alpha=(1^n)\vDash n$, $M_\alpha=[n]$ is a set, $W_\alpha$ is the set of all permutations $\sigma\in S_n$ of $n$, and $\des(\sigma), \maj(\sigma)$ are the classical descent and major indices of a permutation; $A_{(1^n)}(t,q)$ is $q$-refinement of the classical Eulerian polynomial $A_{(1^n)}(t,1)$ enumerating permutations by descent. For $\alpha=(n)\vDash n$, $M_\alpha=\{1^n\}$ and $W_\alpha$ consists of a single element $(1\ldots 1)$, with both descent and major index $0$, so $A_{(n)}(t,q)=1$. Further examples are given below in Examples~\ref{ex21}-\ref{ex22}.

The following combinatorial identity of MacMahon will play an important role. 

\begin{theorem} 
For any composition $\alpha \vDash n$, we have the identity 
\begin{equation} \sum_{r=0}^\infty \left(\prod_{i=1}^k  \left[\begin{array}{c} r+\alpha_i\\ \alpha_i\end{array}\right]_q\right) t^r=  \frac{A_\alpha(t,q)}{\prod_{j=0}^n (1-q^jt)}.
% t^{l-1} 
\label{eq:mmcarlitz} 
\end{equation}
\end{theorem}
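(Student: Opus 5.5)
The plan is the standard ``barred words'' (or $P$-partition) argument: interpret both sides as generating functions for a common set of combinatorial objects, then decompose by the underlying word. I use the classical fact that $\left[\begin{smallmatrix} r+a\\ a\end{smallmatrix}\right]_q$ is the generating function $\sum q^{b_1+\cdots+b_a}$ over weakly increasing sequences $0\le b_1\le\cdots\le b_a\le r$. Hence the coefficient of $t^r$ on the left-hand side of \eqref{eq:mmcarlitz} is the generating function, by total sum, of $k$-tuples of such sequences, the $i$-th of length $\alpha_i$, all bounded by $r$. Equivalently, it counts functions $f\colon M_\alpha\to\{0,\ldots,r\}$ weighted by $q^{|f|}$, where the $\alpha_i$ copies of the letter $i$ carry a multiset of values. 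So the left-hand side equals $\sum_{r\ge0}t^r\sum_{f} q^{|f|}$, summed over such labellings with values at most $r$.

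The second step encodes each labelling as a pair consisting of a word and a compatible sequence. Given the $n$ values attached to the letters of $M_\alpha$, sort the pairs (value, letter) lexicographically, with weakly increasing value and ties broken by increasing letter. Reading off the letters gives a word $w=(i_1\ldots i_n)\in W_\alpha$ together with a weakly increasing value sequence $0\le c_1\le\cdots\le c_n\le r$. This sequence satisfies the extra constraint $c_j<c_{j+1}$ whenever $j\in\Des(w)$, because equal values were sorted by increasing letter. The map is a bijection between labellings and pairs $(w,c)$ with $c$ compatible with $\Des(w)$ in this sense. I would check this bijection carefully, including how repeated letters with equal values are handled; this is the only place where the multiset structure enters.

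The third step fixes $w$ and sums over compatible $c$ and over $r\ge c_n$. I subtract the forced strict increases: set $c'_j=c_j-\#\{d\in\Des(w): d<j\}$. Then $c'$ is an arbitrary weakly increasing sequence, $|c|=|c'|+\sum_{d\in\Des(w)}(n-d)$, and $r'=r-\des(w)\ge c'_n$. The generating function $\sum_{r'}t^{r'}\sum_{0\le c'_1\le\cdots\le c'_n\le r'}q^{|c'|}$ equals $1/\prod_{j=0}^n(1-q^jt)$. To see this, write the data as $n+1$ gaps between $0\le c'_1\le\cdots\le c'_n\le r'$: a gap appearing before exactly $j$ of the entries contributes weight $q^{j}t$, and $j$ runs over $0,\ldots,n$.

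This yields $\sum_{w}t^{\des(w)}q^{\sum_{d\in\Des(w)}(n-d)}\big/\prod_j(1-q^jt)$, which is the comaj statistic rather than $\maj$. The main obstacle is therefore fixing this orientation. I would first try reversing the tie-breaking or ordering convention, namely sorting values weakly decreasingly, or equivalently applying $c_j\mapsto r-c_j$. Under $c\mapsto r-c$, the weight $q^{|c|}$ becomes $q^{nr-|c|}$, which is not directly of the required form. So instead I would use the involution on $W_\alpha$ that reverses the word and complements letters ($i\mapsto k+1-i$), combined with reversing $\alpha$. This sends $\Des(w)$ to $\{n-d\}$ and so exchanges comaj with maj while preserving $\des$. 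The left-hand side is symmetric in the order of the $\alpha_i$, and $A_\alpha=A_{\alpha^{\mathrm{rev}}}$ under this involution. Together these give \eqref{eq:mmcarlitz}.
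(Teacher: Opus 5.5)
Your proof is correct. It differs from the paper only in that the paper gives no argument here: it simply cites MacMahon and Gessel's survey. What you wrote is essentially the standard $P$-partition (barred-word) proof from that literature, made self-contained, and each step checks out.

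\begin{itemize}
\item The sorting bijection is correct. Breaking ties by increasing letter is exactly what forces $c_j<c_{j+1}$ at descents, and it makes the map well defined on multisets of values.
\item The shift $c'_j=c_j-\#\{d\in\Des(w):d<j\}$ gives the weight $t^{\des(w)}q^{\sum_{d\in\Des(w)}(n-d)}$.
\item Your gap argument reproves the negative $q$-binomial theorem of Lemma~\ref{lem:Pn}.
\end{itemize}
What your route buys over the citation is transparency. Each forced strict increase uses up one unit of the bound $r$, which is why $\des$ is paired with $t$. The positions of those forced increases are what produce the $q$-exponent.

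Two small corrections.

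First, your closing sentence is imprecise. Reversal combined with complementation $i\mapsto k+1-i$ is a bijection $W_\alpha\to W_{\alpha^{\mathrm{rev}}}$. It gives $\sum_{w\in W_\alpha}t^{\des(w)}q^{\mathrm{comaj}(w)}=A_{\alpha^{\mathrm{rev}}}(t,q)$, not $A_\alpha=A_{\alpha^{\mathrm{rev}}}$ directly. The clean chain is: run your argument for $\alpha^{\mathrm{rev}}$, apply the bijection $W_{\alpha^{\mathrm{rev}}}\to W_\alpha$, and use that the left-hand side is unchanged when the $\alpha_i$ are reordered. The equality $A_\alpha=A_{\alpha^{\mathrm{rev}}}$ then comes out as a consequence, not as an input.

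Second, the detour is unnecessary. Your first idea, sorting values weakly decreasingly, works directly, and it is not equivalent to $c_j\mapsto r-c_j$ at the level of weights. Order the pairs with weakly decreasing values and ties broken by increasing letter; then $c_j>c_{j+1}$ is forced at every $j\in\Des(w)$. The shift $c'_j=c_j-\#\{d\in\Des(w): d\ge j\}$ produces an arbitrary weakly decreasing sequence with $|c|=|c'|+\maj(w)$ and $c_1=c'_1+\des(w)$. So the factor $t^{\des(w)}q^{\maj(w)}$ appears immediately, with no involution needed.
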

\begin{proof} This identity is proved by MacMahon in~\cite[Vol. 2 Sect. IX]{Mac}; for a modern reference, see~\cite[Sect. 3]{Gessel}. The case $\alpha=(1^n)$, corresponding to permutatons, is the so-called Carlitz identity~\cite{Carlitz}.
\end{proof}

\subsection{Standard Young tableaux} \label{subsec:SYT} Given a partition~$\lambda\vdash n$ of~$n$, denote as usual by $\SYT(\lambda)$ the set of standard Young tableaux of shape $\lambda$, the set of fillings $T$ of the Young diagram corresponding to $\lambda$ bijectively by $[n]$ such that the entries in each row and each column are increasing. The descent, respectively major indices of a standard Young tableau $T\in \SYT(\lambda)$, denoted by $\des(T)$ and $\maj(T)$, are defined as the number, respectively sum, of all $1\leq i\leq n-1$ such that~$i + 1$ appears in a lower row in~$T$ than~$i$.

\subsection{Refined Ehrhart theory}\label{sec_qEhr}

Let $\Sigma\subset\R^n$ be an $n$-dimensional convex lattice polytope, the convex hull of finitely many lattice points in $\Z^n\subset\R^n$. 
For a non-negative integer~$r$, let $r\Sigma\subset\R^n$ denote the $r$-fold dilation of $\Sigma$. Fix an $n$-tuple of integer weights ${\bf a}=(a_1, \ldots, a_n)$. Following Chapoton~\cite{Chapoton}, for $k\in\Z$ consider the quantity
\[ L^\ba_\Sigma(r,k) = \#\{ v \in r\Sigma \cap \Z^n : \bv\cdot {\ba} =k \}.\]
Geometrically, this corresponds to slicing the polytope~$\Sigma$ with hyperplanes perpendicular to the direction~$\ba$, and counting lattice points in the slices. Let
\[ L^\ba_{\Sigma,r}(q) = \sum_{k\in\Z} L^\ba_\Sigma(r,k)q^k,\]
a Laurent polynomial in the variable $q$, and consider the series
\[ E^\ba_{\Sigma}(t,q) = \sum_{r=0}^\infty  t^r L^\ba_{\Sigma,r}(q)  .
\]
Setting $q=1$, $L^\ba_{\Sigma,r}(1)$ and $E^\ba_{\Sigma}(t,1)$ are the Ehrhart polynomial and Ehrhart series of $\Sigma$, respectively. 
%We will hence call $E^\ba_{\Sigma}(t,q)$ the $q$-refined Ehrhart series of $(\Sigma,\ba)$. 

Consider the polytope algebra of $\Sigma$, the $\C$-algebra
\[
\pa_\Sigma =  \C[x_{\bv,r} \mid r\geq 0, \bv \in r\Sigma \cap \Z^d]/\langle  x_{\bv,r}\cdot x_{\bv',r'} - x_{\bv+\bv', r+r'}\rangle.\]
This is a naturally a bigraded algebra, with the generator $x_{\bv,r}$ having bidegree $(r,\bv\cdot \ba)$. Using only the first grading, $\pa_\Sigma$ is non-negatively graded, and $\Proj A_\Sigma$ is the projective toric variety defined by the polytope $\Sigma$. By definition, 
\[\dim_{t,q} \pa_\Sigma =E^\ba_{\Sigma}(t,q).\]

A simple example of this setup  will be needed later below; compare~\cite{AS}. Let $\Sigma=\Delta_n\subset\R^n$  be the basic rectangular simplex in $n$-space, with vertices at the origin and the endpoints of the standard orthonormal basis vectors. Let $\ba=(1,\ldots, n)\in\Z^n$. The corresponding polytope algebra is  $\pa_{\Delta_n}\cong\C[\P^n]\cong \C[z_0,\dots, z_n]$, bigraded by $\deg(z_j)=(1,j)$.
\begin{lemma} \label{lem:Pn}
Under this bigrading, the $\C$-algebra $\C[\P^n]$ has bigraded Hilbert series
\begin{equation}
E^\ba_{\Delta_n}(t,q) = \dim_{t,q} \C[\P^n] =\prod_{j=0}^n (1-tq^j)^{-1} =  \sum_{r\geq 0}  t^r \left[\begin{array}{c} r+n\\ n\end{array}\right]_q.
\label{qseries}
\end{equation}
Hence the $q$-count of lattice points in the $r$-th dilation is
\[ L^\ba_{\Delta_n,r}(q) = \left[\begin{array}{c} r+n\\ n\end{array}\right]_q.\]
\end{lemma}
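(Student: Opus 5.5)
The plan is to compute the bigraded Hilbert series of $\C[z_0,\dots,z_n]$ directly from its monomial basis. The identification of $\pa_{\Delta_n}$ with $\C[z_0,\ldots,z_n]$ comes first. The lattice points of $r\Delta_n$ are the $\bv=(v_1,\ldots,v_n)\in\Z_{\geq0}^n$ with $\sum v_j\leq r$. Setting $v_0=r-\sum_{j\geq1} v_j$, these correspond bijectively to monomials $z_0^{v_0}z_1^{v_1}\cdots z_n^{v_n}$ of total degree $r$. The relations $x_{\bv,r}x_{\bv',r'}=x_{\bv+\bv',r+r'}$ match monomial multiplication, and the bidegree of $x_{\bv,r}$ is $(r,\sum_j jv_j)$, which is exactly the bidegree of the monomial when $\deg z_j=(1,j)$. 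This also gives $\dim_{t,q}\pa_{\Delta_n}=E^\ba_{\Delta_n}(t,q)$, as in the general discussion above.

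The Hilbert series of a free polynomial ring on generators of bidegrees $(1,j)$, $j=0,\ldots,n$, is the product of geometric series $\prod_{j=0}^n(1-tq^j)^{-1}$, since each monomial factors uniquely as a product of powers of the $z_j$. This gives the first equality in~\eqref{qseries}.

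For the second equality, it remains to extract the coefficient of $t^r$. There are two routes, and I would write the first.

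\emph{Route via MacMahon.} Apply the identity~\eqref{eq:mmcarlitz} with $\alpha=(n)$, so $k=1$. Then $A_{(n)}(t,q)=1$, as noted in Section~\ref{subsec:multi}, and the identity reads $\sum_r \left[\begin{smallmatrix} r+n\\ n\end{smallmatrix}\right]_q t^r=\prod_{j=0}^n(1-q^jt)^{-1}$. This is precisely the required statement.

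\emph{Self-contained route.} The coefficient of $t^r$ in $\prod_{j=0}^n(1-tq^j)^{-1}$ is the generating function $\sum q^{\sum_j jv_j}$ over $(v_0,\ldots,v_n)$ with $\sum v_j=r$. Equivalently, it counts partitions with at most $r$ parts, each of size at most $n$ (or, by conjugation, fitting in an $r\times n$ box). The standard Gaussian binomial fact then identifies this with $\left[\begin{smallmatrix} r+n\\ n\end{smallmatrix}\right]_q$. To prove that fact one can induct using the $q$-Pascal recursion.

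Finally, the claim about $L^\ba_{\Delta_n,r}(q)$ follows by comparing coefficients of $t^r$, using the definition $E^\ba_\Sigma=\sum_r t^rL^\ba_{\Sigma,r}$. No step is a real obstacle. The only point needing care is the correct bookkeeping of $v_0$ in the identification of lattice points with monomials, so that the bidegrees match.
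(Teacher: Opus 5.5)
Your proposal is correct and is essentially the paper's proof. The product formula comes from $\C[z_0,\ldots,z_n]$ being free on generators of bidegrees $(1,j)$, and the coefficient extraction is the negative $q$-binomial theorem: your partitions-in-a-box route proves it directly, and the $\alpha=(n)$ case of MacMahon's identity~\eqref{eq:mmcarlitz} that your first route cites is exactly this theorem (not circular, since the paper imports that identity from the literature, but roundabout). Your explicit matching of lattice points of $r\Delta_n$ with degree-$r$ monomials via $v_0=r-\sum_{j\geq 1}v_j$ just spells out the identification $\pa_{\Delta_n}\cong\C[z_0,\ldots,z_n]$ that the paper takes for granted.
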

\begin{proof} As $\C[\P^n]$ is freely generated by the variables $z_i$, the second equality is clear. The third equality is the negative $q$-binomial theorem. 
\end{proof} 

\subsection{Families of algebras} One of the key themes of this paper is the appearance of families of algebras over the affine plane. Such a family is a flat homomorphism of finitely generated (often finite-dimensional) $\C$-algebras $\C[s_0, s_1]\rightarrow \mathcal{A}$, corresponding to a flat family of affine varieties
\[\pi\colon \Spec\mathcal{A} \to \Spec\C[s_0, s_1]\cong{\mathbb A}^2. 
\]
For some fixed $(t_0, t_1)\in\A^2$, denote 
\[
\mathcal{A}_{(t_0, t_1)} = \mathcal{A}\otimes_{\C,(t_0, t_1)} \C[s_0, s_1]
\]
the fibre algebra of this family, where the tensor product is defined via the algebra homomorphism $\C[s_0, s_1]\to\C$ being evaluation at $(t_0,t_1)$. Then clearly
\[\mathcal{A}_{(t_0, t_1)}  \cong \C[\pi^{-1}(t_0, t_1)],
\]
the coordinate algebra of the corresponding geometric fibre of $\pi$. 

%As explained in, these formulae have a simple interpretation in polytope geometry. Namely, The generating function of $q$-counts of lattice points in dilations of the simplex $\Delta_n$, where each lattice point $(l_1,\ldots, l_n)\in \Z^n$ has $q$-weight $\sum_{i=1}^ni\cdot l_i$, gives the $q$-Ehrhart series~\eqref{qseries}, the bigraded Hilbert series of the polytope ring. 
%The individual coefficient $\left[\begin{array}{c} r+n\\ n\end{array}\right]_q$ corresponds to the $q$-count of lattice points in the $r$-th dilation $r\cdot\Delta_n$, the variable $t$ being responsible for the first grading. 

\section{The projective coinvariant algebra and its deformations}
\subsection{The basic definition}\label{section:basic_def}  Consider the bigraded $\C$-algebra
\[ \pa_n = \C[x_{\scriptscriptstyle I}\colon I\subseteq[n]]/ J_n, 
\]
where the generator $x_{\scriptscriptstyle I}$ has bidegree $(1,|I|)$ and $J_n$ is the bigraded toric ideal 
\[
J_n=\left\langle x_{\scriptscriptstyle I}x_{\scriptscriptstyle J}-x_{\scriptscriptstyle I\cap J}x_{\scriptscriptstyle I\cup J}\colon I,J\subseteq[n]\right\rangle. 
\]
The bigrading on $\pa_n$ can equivalently be thought of as the action of the two-dimensional torus $(\C^*)^2$ on $\pa_n$ defined by 
\[
 (t_0, t_1) \circ x_{\scriptscriptstyle I} =  t_0 t_1^{|I|} x_{\scriptscriptstyle I}. 
\]
In the language of Section~\ref{sec_qEhr}, the algebra $\pa_n$ is the polytope algebra of the $n$-dimensional unit hypercube, with generators $x_{\scriptscriptstyle I}$ corresponding to 
vertices of the hypercube. The bigrading corresponds to considering $q$-Ehrhart theory with $\ba=(1,\ldots, 1)$. As it is well known, when considered graded by the first component only, $\Proj\pa_n$ is the image of the Segre embedding
\[s_n\colon (\P^1)^n\rightarrow \P^{2^n-1}.\]
Indeed, the Segre embedding can be given in coordinates by 
\[\begin{array}{rcccc}
s_n& \colon &  (\P^1)^n & \rightarrow &  \P^{2^n-1}\\
&& \left([a_1:b_1], \ldots, [a_n:b_n] \right)& \mapsto & \left[\ldots\colon \prod_{i\in I} a_i\prod_{j\not\in I}b_j : \ldots\right]
\end{array}\]
The ideal $J_n$ records all relations between the image coordinates. So $\pa_n$ can be thought of as the projective coordinate ring of the image of the Segre embedding~$s_n$, equipped with an extra grading. 

We will occasionally find it useful to consider the following alternative model of this algebra. Consider the free bigraded commutative algebra
$\C[\zeta,\xi_1, \ldots, \xi_n]$ with $\deg(\zeta) =(1,0)$ and $\deg(\xi_i)=(0,1)$. Then it is well known that $T_n$ is isomorphic to a bigraded subalgebra of 
$\C[\zeta,\xi_1, \ldots, \xi_n]$, the image of the embedding
\begin{equation} \begin{array}{rcl} T_n & \hookrightarrow & \C[\zeta,\xi_1, \ldots, \xi_n]\\
x_I&\mapsto& \zeta\cdot \prod_{i\in I} \xi_i.
\end{array}\label{explicitembedding}
\end{equation}
Indeed the relations $x_{\scriptscriptstyle I}x_{\scriptscriptstyle J}=x_{\scriptscriptstyle I\cap J}x_{\scriptscriptstyle I\cup J}$ for $I,J\subseteq[n]$ are automatically satisfied under this map. 

For $0\leq k\leq n$, denote
\[\te_k= \sum_{\substack{I\subseteq[n]\\|I|=k}} x_{\scriptscriptstyle I}\in \pa_n. 
\]
Note $\te_0=x_{\scriptscriptstyle\emptyset}$, $\te_n = x_{\scriptscriptstyle [n]}$, and that the element $\te_k\in\pa_n$ is homogeneous of bidegree $(1,k)$.

The following results were proved in~\cite[Prop.2.7]{AS}; compare also~\cite[Section 4]{ORS}. (The statements also appear in~\cite{BO}, but with an incomplete argument for the first statement.)
\begin{proposition} \label{prop_reg}
The elements $\te_0, \ldots, \te_n$ form a regular sequence of maximal length in the algebra~$\pa_n$. The quotient
\[P_n = \pa_n / \left\langle \te_0, \ldots, \te_n\right\rangle\]
is a finite-dimensional bigraded Gorenstein $\C$-algebra, with bigraded Hilbert series
\[
\dim_{t,q} P_n = A_{(1^n)}(t,q)=\sum_{\sigma\in S_n} t^{\mathrm{des}(\sigma)}q^{\mathrm{maj}(\sigma)}.
\]
\end{proposition}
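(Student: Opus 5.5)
The plan is to prove the three assertions in order: Cohen--Macaulayness of $\pa_n$ plus finite-dimensionality of the quotient, then the Hilbert series, then the Gorenstein property.

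\textbf{Cohen--Macaulayness and the regular sequence.} First record the bigraded Hilbert series of $\pa_n$ itself. Since $\pa_n$ is the polytope algebra of the unit cube $[0,1]^n$ with $\ba=(1,\ldots,1)$, the lattice points of $r[0,1]^n$ are the vectors in $\{0,\ldots,r\}^n$. Counting them with weight $q^{\sum v_i}$ gives $([r+1]_q)^n$. By MacMahon's identity~\eqref{eq:mmcarlitz} with $\alpha=(1^n)$,
\[
\dim_{t,q}\pa_n=\sum_{r\ge0}[r+1]_q^n\,t^r=\frac{A_{(1^n)}(t,q)}{\prod_{j=0}^n(1-tq^j)}.
\]
Next, $\pa_n$ is a normal affine semigroup ring: the cube is a normal lattice polytope, so Hochster's theorem shows $\pa_n$ is Cohen--Macaulay of Krull dimension $n+1$. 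It therefore suffices to show that $\pa_n/\langle\te_0,\ldots,\te_n\rangle$ is finite-dimensional. Since the number of elements equals the dimension, the sequence is then a homogeneous system of parameters, hence regular.

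\textbf{Finite-dimensionality.} This is the main obstacle. By the graded Nullstellensatz, it amounts to showing that the only common zero of $\te_0,\ldots,\te_n$ on the affine cone over the Segre image is the vertex. By the embedding~\eqref{explicitembedding}, a point of the cone (up to closure, which suffices since the $\te_k$ are linear and the Segre image is covered by products) has the form $x_I=\prod_{i\in I}a_i\prod_{j\notin I}b_j$. Under this substitution,
\[
\te_k=e_k\text{-type sum}=[u^k]\prod_{i=1}^n(b_i+a_iu).
\]
If all $\te_k$ vanish, then the polynomial $\prod_i(b_i+a_iu)$ is identically zero, so some factor satisfies $a_i=b_i=0$, and hence every $x_I=0$. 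This argument works on the Segre variety, which is exactly the set of points of the cone, so the zero locus is just the origin.

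\textbf{Hilbert series.} Quotienting by a regular sequence of elements of bidegrees $(1,j)$ for $j=0,\ldots,n$ multiplies the Hilbert series by $\prod_{j=0}^n(1-tq^j)$. This gives $\dim_{t,q}P_n=A_{(1^n)}(t,q)$, and in particular $\dim P_n=n!$.

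\textbf{Gorenstein property.} Use Stanley's criterion: a Cohen--Macaulay domain is Gorenstein if and only if its Hilbert series satisfies the functional equation $H(1/t)=\pm t^{d}H(t)$. Equivalently, for a normal semigroup ring one can check that the interior of the cone is a principal translate. The interior lattice points of $r[0,1]^n$ are $\{1,\ldots,r-1\}^n$, which is $x_{[n]}\cdot x_\emptyset$ times the lattice points of $(r-2)$ times the cube, shifted by $(1,\ldots,1)$. So the canonical module is generated in degree $2$, and $\pa_n$ is Gorenstein. Gorensteinness passes to the quotient by a regular sequence, so $P_n$ is an Artinian Gorenstein algebra. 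As a consistency check, this matches the palindromicity of $A_{(1^n)}(t,q)$ under $\sigma\mapsto$ its reverse-complement.
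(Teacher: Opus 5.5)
Your proof is correct. The paper itself gives no argument for this proposition; it simply cites \cite[Prop.~2.7]{AS}, later described as an explicit proof, and \cite[Section~4]{ORS}.

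Your route is the conceptual toric one:
\begin{enumerate}
\item Cohen--Macaulayness of $\pa_n$ comes from Hochster's theorem.
\item Finite-dimensionality of the quotient comes from the Nullstellensatz together with the identity $\te_k=[u^k]\prod_{i}(b_i+a_iu)$ on the affine cone.
\item The Hilbert series comes from exactly the refined-Ehrhart-plus-MacMahon mechanism the paper uses later in Theorem~\ref{thm:bigradedPa}, here with $\alpha=(1^n)$.
\item The Gorenstein property comes from the Danilov--Stanley description of the canonical module as the principal ideal $x_{\scriptscriptstyle\emptyset}x_{\scriptscriptstyle[n]}\pa_n$.
\end{enumerate}
The citation keeps the paper short. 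Your version is self-contained and gives more. It identifies the socle bidegree of $P_n$ as $(n+1,\binom{n+1}{2})-(2,n)=(n-1,\binom{n}{2})$. It also transfers verbatim to the later proposition on $\tf_0,\ldots,\tf_n\in\pa_\alpha$: the polytope $\Delta_\alpha$ is normal, and on the cone $\tf_l=[u^l]\prod_{i=1}^k\bigl(\sum_{j=0}^{\alpha_i}z^{(i)}_ju^j\bigr)$. Finally, the same interior-point computation shows that $\pa_\alpha$ is Gorenstein only when all $\alpha_i$ are equal. This explains why the paper calls $P_\alpha$ merely Cohen--Macaulay; compare the non-palindromic $A_{(2,1)}(t,q)=1+tq+tq^2$.

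Two small corrections.

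First, the parenthetical ``up to closure, which suffices'' is not a valid reduction. Ruling out common nonzero zeros of the $\te_k$ on a dense subset of the cone says nothing about the remaining points, and the embedding~\eqref{explicitembedding} only sees the chart where all $b_i\neq 0$. What you need, and what your final sentence in fact uses, is that
\[
(a_i,b_i)_i\mapsto\Bigl(\prod_{i\in I}a_i\prod_{j\notin I}b_j\Bigr)_I
\]
maps $(\C^2)^n$ onto the whole affine cone, since every point of the cone is $0$ or a decomposable tensor. So drop the closure remark.

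Second, the involution in your consistency check is the wrong one. Reverse-complement preserves $\des$ and sends $\maj(\sigma)$ to $n\des(\sigma)-\maj(\sigma)$. The Gorenstein symmetry $(d,e)\mapsto(n-1-d,\binom{n}{2}-e)$ is instead realised by the complement $\sigma(i)\mapsto n+1-\sigma(i)$, which sends $\Des(\sigma)$ to $[n-1]\setminus\Des(\sigma)$.
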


Call the bigraded algebra $P_n$, first introduced independently in~\cite{BO, AS} (and, without its bigrading, in~\cite{ORS}), the {\em projective coinvariant algebra}; the name will be justified by Theorem~\ref{thm_main_def} below. 
%Denote $q_n\colon T_n\to P_n$ the quotient map. 

%\begin{remark} The defining relations of the algebra $\pa_n$ are closely related to those of the Stanley--Reisner ring of the poset. Will explain the difference if asked.\end{remark}

\subsection{A family of deformations}\label{sec:def}

Introduce the $\C[s_0, s_1]$-algebras
\[ \CT_n = \pa_n[s_0, s_1]\]
and
\[ \CP_n = \CT_n/\langle x_{\scriptscriptstyle\emptyset} - s_0, \te_1, \ldots, \te_{n-1}, x_{\scriptscriptstyle [n]}-s_1\rangle. 
\]
We can think geometrically of the map \[\pi_n\colon \Spec\CP_n\to \Spec\C[s_0, s_1]=\A^2\] induced by the inclusion $\C[s_0, s_1]\rightarrow \CP_n$ as a giving a two-parameter family of (spectra of) algebras over the affine plane $\A^2$, a subfamily of the trivial family 
\[\tau_n\colon \Spec\CT_n\to \Spec\C[s_0, s_1]=\A^2.\] 

Consider the action of the torus $(\C^*)^2$ on $\C[s_0, s_1]$ given by 
\[ (t_0, t_1) \circ (s_0, s_1) = (t_0 s_0, t_0 t_1^n s_1). 
\]
This can be extended to a compatible action of $(\C^*)^2$ on $\CT_n$ and~$\CP_n$ over $\C[s_0, s_1]$. 
The symmetric group $S_n$ also acts on~$\CT_n$ and thus on $\CP_n$ by combining its standard action on $[n]$ inducing an action on all subsets with the trivial action on $\C[s_0, s_1]$.

\begin{theorem}\label{thm_main_def}\begin{enumerate}
\item[(i)] The inclusion $\C[s_0, s_1]\rightarrow \CP_n$ is a flat map of algebras, equivariant with respect to the action of the torus $(\C^*)^2$, as well as the symmetric group $S_n$. 
\item[(ii)] The central fibre $(\CP_n)_{(0,0)}$ is bigraded $S_n$-isomorphic to the projective coinvariant algebra $P_n$. 
\item[(iii)] Over points $(s_0, 0)$ and $(0,s_1)$ with $s_i\neq 0$, the fibres $(\CP_n)_{(s_0,0)}$, $(\CP_n)_{(0,s_1)}$ are graded $S_n$-isomorphic to the classical coinvariant algebra $R_n$.
\item[(iv)] Over points $(s_0, s_1)$ with $s_0s_1\neq 0$, the fibre $(\CP_n)_{(s_0,s_1)}$ is an (ungraded) finite-dimensional algebra of dimension $n!$, $S_n$-isomorphic to the group algebra $\C S_n$. 
\end{enumerate}
\end{theorem}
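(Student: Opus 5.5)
The plan is to prove (i) by a Cohen--Macaulay/regular-sequence argument, and then to identify the fibres by working in explicit affine charts of the Segre variety, using the model~\eqref{explicitembedding}.

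\emph{Equivariance and finiteness.} Let $(t_0,t_1)$ act on $x_{\scriptscriptstyle I}$ by $t_0t_1^{|I|}$, as in Section~\ref{section:basic_def}, and on $s_0,s_1$ as prescribed. Then the generators $x_{\scriptscriptstyle\emptyset}-s_0$, $\te_1,\ldots,\te_{n-1}$, $x_{\scriptscriptstyle[n]}-s_1$ of the defining ideal of $\CP_n$ are all homogeneous. Indeed, $x_{\scriptscriptstyle\emptyset}$ and $s_0$ both have weight $t_0$, and $x_{\scriptscriptstyle[n]}$ and $s_1$ both have weight $t_0t_1^n$. These generators are also $S_n$-invariant, so $\CP_n$ inherits both actions and $\C[s_0,s_1]\to\CP_n$ is equivariant. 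Using the first grading, which is positive on $s_0,s_1$ and on all $x_{\scriptscriptstyle I}$, we have $\CP_n/\langle s_0,s_1\rangle\cong \pa_n/\langle\te_0,\ldots,\te_n\rangle=P_n$. This is finite-dimensional by Proposition~\ref{prop_reg}. Graded Nakayama then shows that $\CP_n$ is a finitely generated $\C[s_0,s_1]$-module, spanned by lifts of a basis of $P_n$. This also gives (ii) directly.

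\emph{Flatness (the main point).} I would use that $\pa_n$ is a normal affine semigroup ring (the cube is a normal polytope), hence Cohen--Macaulay by Hochster's theorem. Consequently $\CT_n=\pa_n[s_0,s_1]$ is Cohen--Macaulay of dimension $n+3$. We quotient by $n+1$ elements and obtain $\CP_n$, which is finite over $\C[s_0,s_1]$ and so has dimension at most $2$. Hence these $n+1$ elements cut the dimension by exactly $n+1$, and in a graded Cohen--Macaulay ring this forces them to form a regular sequence. Therefore $\CP_n$ is Cohen--Macaulay of dimension $2$ and module-finite over the regular ring $\C[s_0,s_1]$. By Auslander--Buchsbaum (miracle flatness) it is then free, of rank $\dim P_n=n!$. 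I expect this step to be the main obstacle; in particular, one has to make sure that the graded/local reduction is legitimate, which it is by positivity of the first grading.

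\emph{Fibres (iii) and (iv).} On the fibre over $(s_0,0)$ with $s_0\neq0$, the element $x_{\scriptscriptstyle\emptyset}$ is a nonzero scalar. Since $\pa_n$ is generated in first degree $1$, we get $\pa_n/\langle x_{\scriptscriptstyle\emptyset}-s_0\rangle\cong(\pa_n[x_{\scriptscriptstyle\emptyset}^{-1}])_0$. This ring is $\C[y_1,\ldots,y_n]$ with $x_{\scriptscriptstyle I}/x_{\scriptscriptstyle\emptyset}=\prod_{i\in I}y_i$; this is the chart $\zeta=1$, $\xi_i=y_i$. Under this identification $\te_k\mapsto s_0e_k(y)$ and $x_{\scriptscriptstyle[n]}\mapsto s_0e_n(y)$, so the fibre is $\C[y]/\langle e_1,\ldots,e_n\rangle=R_n$. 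This identification is $S_n$-equivariant and graded for the subtorus fixing the point. The fibre over $(0,s_1)$ is handled symmetrically, using the chart $x_{\scriptscriptstyle[n]}\neq0$ with coordinates $x_{[n]\setminus\{i\}}/x_{\scriptscriptstyle[n]}$.

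For (iv), the torus orbit of $(1,1)$ is all of $\{s_0s_1\neq0\}$, so it suffices to treat the fibre over $(1,1)$. In the same chart this fibre is $\C[y]/\langle e_1,\ldots,e_{n-1},e_n-1\rangle$. Its points are the tuples $(y_1,\ldots,y_n)$ whose entries are the roots of $z^n-(-1)^{n-1}$, which are $n$ distinct values, listed in some order. So the fibre is reduced and consists of exactly $n!$ points, with $S_n$ acting simply transitively. Its coordinate ring is therefore the regular representation, $S_n$-isomorphic to $\C S_n$, and it has dimension $n!$, consistent with flatness.
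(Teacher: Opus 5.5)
Your proof is correct. For (ii)--(iv) it is essentially the paper's argument. Your chart $\{x_{\scriptscriptstyle\emptyset}\neq0\}$ with $y_i=x_{\{i\}}/x_{\scriptscriptstyle\emptyset}$ is the paper's map $\psi_n(u_i)=x_{\{i\}}$. Your chart at $x_{\scriptscriptstyle[n]}$ is its complementation symmetry. Both of you reduce (iv) to the fibre over $(1,1)$ by the torus action. Your dehomogenisation $\pa_n/\langle x_{\scriptscriptstyle\emptyset}-1\rangle\cong(\pa_n[x_{\scriptscriptstyle\emptyset}^{-1}])_0\cong\C[y_1,\ldots,y_n]$ actually supplies the justification behind the paper's ``it is easy to see that these elements generate the kernel of $\psi_n$''. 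Your description of the generic fibre via the roots of $z^n-(-1)^{n-1}$ is also more accurate than the paper's orbit of $(1,\eta,\ldots,\eta^{n-1})$, which satisfies $e_n=1$ only for odd $n$.

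The real difference is flatness. The paper invokes openness of the regular-sequence condition, and then concludes at the very end from $\pi_n$ being finite over the smooth base $\A^2$ with all fibres of length $n!$. It therefore needs all the fibre computations first, including the length of the generic fibre, and it leaves finiteness implicit. You prove freeness up front and independently of the fibres. Finiteness comes from graded Nakayama. Cohen--Macaulayness of $\CT_n$ plus the dimension count forces $x_{\scriptscriptstyle\emptyset}-s_0,\te_1,\ldots,\te_{n-1},x_{\scriptscriptstyle[n]}-s_1$ to be regular. Graded Auslander--Buchsbaum then gives rank $n!$ everywhere for free. You do not even need Hochster: Proposition~\ref{prop_reg} already gives a regular sequence of length $n+1=\dim\pa_n$.

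You should use that rank to close one small logical step. $n!$ distinct points over $(1,1)$ do not by themselves make the fibre reduced, but $n!$ points in a scheme of length $n!$ do. Alternatively, the Jacobian of $(e_1,\ldots,e_n)$ is the Vandermonde, which is nonzero at points with distinct coordinates. Also, over $(0,s_1)$ the stabilising subtorus $t_0t_1^n=1$ acts on $x_{\scriptscriptstyle[n]\setminus\{i\}}/x_{\scriptscriptstyle[n]}$ with weight $t_1^{-1}$. It must therefore be reparametrised to give the standard grading on $R_n$; the paper glosses over this too.
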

\begin{proof}

The central fibre~$(\CP_n)_{(0,0)}$ is by definition isomorphic to the projective coinvariant algebra $P_n$. On the other hand, being a regular sequence is an open property, so the corresponding elements give regular sequences in fibres near the origin, and hence, using the torus action, over the whole affine plane. 

Let us consider the fibre $(\CP_n)_{(s_0,0)}$ over a point $(s_0, 0)$ with $s_0\neq 0$. Re-scaling using the first component of the torus action, we can assume that $s_0=1$. In this algebra $(\CP_n)_{(1,0)}$, the image of the element $x_{\scriptscriptstyle \emptyset}$ gets identified with the unit. Define a $\C$-algebra homomorphism 
\[\psi_n\colon \C[u_1, \ldots, u_n]\to (\CP_n)_{(1,0)}\]
by $\psi_n(u_i)=(x_{\scriptscriptstyle\{i\}})_{(1,0)}$. Then for $i\not\in I$, using the relations $x_{\scriptscriptstyle\emptyset} x_{\scriptscriptstyle I\cup \{i\}} = x_{\scriptscriptstyle\{i\}} x_{\scriptscriptstyle I}$ inductively, we deduce 
\[\psi_n\left(\prod_{i\in I} u_i\right) = (x_{\scriptscriptstyle I})_{(1,0)}  \in (\CP_n)_{(1,0)}.\]
In particular, if $e_k(u_1,\ldots, u_n)$ denotes the elementary symmetric functions of the variables $u_1, \ldots, u_n$, we get
\[\psi_n\left(e_k(u_1,\ldots, u_n)\right) = \sum_{\substack{I\subseteq[n]\\|I|=k}} (x_{\scriptscriptstyle I})_{_{(1,0)}} =  0\in (\CP_n)_{(1,0)}.\]
It is easy to see that these elements generate the kernel of $\psi_n$. We deduce the isomorphism 
\[ (\CP_n)_{(1,0)} \cong  \C[u_1\,\ldots, u_n]/\langle e_1(u_i), \ldots, e_n(u_i)\rangle = \C[u_1\,\ldots, u_n]/\langle \C[u_1\,\ldots, u_n]^{S_n}_+\rangle
\]
with the classical coinvariant algebra $R_n$. 
The subtorus $\{t_0=1\}\subset (\C^*)^2$ fixes the point $(1,0)\in\A^2$, and acts on the fibre algebra $(\CP_n)_{(1,0)}$, equipping it  with a grading agreeing with the standard grading on the coinvariant algebra. The remaining statements in (iii) are then well known. 

The fibre over a point $(0, s_1)$ is isomorphic to the fibre over a point $(s_0, 0)$ using the ``external'' automorphism of the whole setup which interchanges the coordinate $x_{\scriptscriptstyle I}$ with the coordinate $x_{\scriptscriptstyle [n]\setminus I}$, and the deformation coordinates $s_0$ and $s_1$ (the central reflection of the hypercube). 

Finally for general fibres $(\CP_n)_{(s_0,s_1)}$ with $s_0s_1\neq 0$, we can re-scale to $s_0=s_1=1$. Setting $v_i=(x_{\{i\}})_{(1,1)}\in(\CP_n)_{(1,1)}$, much of the argument of the previous paragraphs goes through, except for the last step where the final relation gets deformed by the parameter $s_1=1$. We get
\[ (\CP_n)_{(1,1)}\cong\C[v_1, \ldots, v_n]\Big/\left\langle \sum_{i=1}^n v_i, \sum_{i<j} v_iv_j, \ldots, \prod_{i=1}^n v_i -1\right\rangle. \]
The latter algebra is the coordinate algebra of the free $S_n$-orbit of the point 
\[(1, \eta, \eta^2, \ldots, \eta^{n-1})\in\A^n = \Spec\C[v_1, \ldots, v_n]\]
for a primitive $n$-th root of unity~$\eta$. Hence this (ungraded) algebra is indeed $S_n$-isomorphic to the regular representation of $S_n$.  

Finally the affine map $\pi_n$ is finite with smooth target and fibres of constant length, hence the inclusion $\C[s_0, s_1]\rightarrow \CP_n$ is indeed flat. 
\end{proof}

\begin{remark} The degeneration of the ungraded group algebra $\C S_n$, realised as the coordinate ring of a free orbit of $S_n$ in affine $n$-space, to the (graded) coinvariant algebra $R_n$ is a particularly simple example of the Garsia--Haiman orbit harmonics method~\cite{GH}. 
From this point of view, what Theorem~\ref{thm_main_def} says is that there is a further, bigraded degeneration of geometric and combinatorial interest. 
\end{remark}

% See Kovacs Sanyi conversation: we would alternatively need source being CM. But it is cut out by a regular sequence globally! So CM. 

\subsection{The graded character of the projective coinvariant algebra}

As discussed above, the projective coinvariant algebra $P_n$ is a bigraded version of the regular representation of the symmetric group $S_n$. Its bigraded character is computed by the following formula, the right hand side of this which first appeared in a closely related but slightly different context in~\cite[Prop.3.14]{RSW}.
\begin{theorem} \label{thm:char} We have 
\[ %\label{eq:char}
\cha_{t,q} P_n  =   \sum_{\lambda\vdash n} \left(\sum_{T\in\SYT(\lambda)} t^{\mathrm{des}(T)}q^{\mathrm{maj}(T)}\right)s_\lambda.
\]
%Here as usual $\lambda\vdash n$ are partitions of~$n$, $s_\lambda$ the corresponding Schur function (irreducible character), and $\SYT(\lambda)$ denotes the set of standard Young tableaux of shape $\lambda$, equipped with standard descent and major indices. 
\end{theorem}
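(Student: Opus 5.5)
Since $\te_0,\ldots,\te_n$ is a regular sequence of $S_n$-invariant elements of bidegrees $(1,k)$ (Proposition~\ref{prop_reg}), the Koszul complex resolves $P_n$ as a bigraded $S_n$-module. This gives, in the bigraded representation ring,
\[
\cha_{t,q} P_n = \prod_{k=0}^n (1-tq^k)\cdot \cha_{t,q}\pa_n .
\]
So the plan is to compute $\cha_{t,q}\pa_n$ and then compare with a known generating function for $\sum_T t^{\des(T)}q^{\maj(T)}$.

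\textbf{Step 1: character of each $t$-graded piece.} The degree-$r$ piece of $\pa_n$ has as basis the lattice points of the $r$-th dilate of the cube, namely vectors $(l_1,\ldots,l_n)\in\{0,\ldots,r\}^n$. The $q$-degree is $\sum l_i$, and $S_n$ acts by permuting coordinates. Via~\eqref{explicitembedding} this piece is $\zeta^r$ times the span of monomials $\xi_1^{l_1}\cdots\xi_n^{l_n}$ with all $l_i\le r$. As a graded $S_n$-module this is $\mathrm{Sym}$-truncated: it equals the tensor power $V_r^{\otimes n}$ of $V_r=\C[\xi]_{\le r}$, with $S_n$ permuting the factors. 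The Frobenius character of such a permuted tensor power is the plethysm $h_n[X\cdot(1+q+\cdots+q^r)]$. Hence
\[
\cha_{t,q}\pa_n=\sum_{r\ge0} t^r\, h_n\!\left[X\frac{1-q^{r+1}}{1-q}\right].
\]

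\textbf{Step 2: pair against a Schur function.} Taking the coefficient of $s_\lambda$ gives $s_\lambda[1+q+\cdots+q^r]$, the principal specialization of $s_\lambda$ at $r+1$ variables. The multiplicity of $s_\lambda$ in $\cha_{t,q}P_n$ therefore becomes
\[
\prod_{k=0}^n(1-tq^k)\sum_{r\ge 0} t^r s_\lambda(1,q,\ldots,q^r).
\]

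\textbf{Step 3: the key identity.} It remains to use the standard identity (Stanley, EC2 Prop.~7.19.12, or via Gessel's fundamental quasisymmetric expansion $s_\lambda=\sum_T F_{\Des(T)}$):
\[
\sum_{r\ge0} t^r s_\lambda(1,q,\ldots,q^r)=\frac{\sum_{T\in\SYT(\lambda)} t^{\des(T)}q^{\maj(T)}}{\prod_{k=0}^n(1-tq^k)}.
\]
This is the tableau analogue of MacMahon's identity~\eqref{eq:mmcarlitz}, and I would prove it in the same way. Expand $s_\lambda$ into fundamental quasisymmetric functions. Then show, for each descent set $D$, that
\[
\sum_r t^r F_D(1,\ldots,q^r)=\frac{t^{|D|}q^{\sum D}}{\prod_{k=0}^n(1-tq^k)}.
\]
To prove this, count weakly increasing sequences $0\le i_1\le\cdots\le i_n\le r$ that are strict at the positions in $D$. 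Subtracting the forced increments turns these into unrestricted sequences, and the reduction gives a Carlitz-type sum over partitions, of the same kind as Lemma~\ref{lem:Pn}.

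Multiplying the three steps together yields the theorem. The main obstacle is Step 3, and specifically keeping the bookkeeping consistent. The descent convention for tableaux ($i+1$ in a lower row) has to match the descent set appearing in Gessel's expansion, and the shift of the $t$-exponent by the number of descents has to come out right. I would check both on $n=2$ before committing to the general argument.
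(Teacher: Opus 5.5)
Your route is the paper's: the Koszul reduction to $\cha_{t,q}\pa_n$, the identification of $(\pa_n)_r$ with $\C[\xi_1,\ldots,\xi_n]_{\le r}$ and its character $\sum_\lambda s_\lambda([r+1]_q)s_\lambda$, then the principal-specialization identity. The paper reaches the middle step through dimensions of Young-subgroup invariants paired with $m_\lambda$ and the Cauchy identity; your tensor-power plethysm is an equivalent shortcut. The paper does not prove the last identity itself but cites it from RSW.

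\textbf{The gap is in your proof of Step 3.} You use the standard weakly increasing $F_D$ and Gessel's expansion $s_\lambda=\sum_T F_{\Des(T)}$. With these, your per-$D$ formula is false for $n\ge 3$. Subtracting the forced increments raises $i_{d+1},\ldots,i_n$ for each $d\in D$, so the minimal weight is $q^{\sum_{d\in D}(n-d)}$, not $q^{\sum D}$. The correct statement is
\[
\sum_{r\ge0}t^rF_D(1,q,\ldots,q^r)=\frac{t^{|D|}q^{\sum_{d\in D}(n-d)}}{\prod_{k=0}^n(1-tq^k)}.
\]
For example, take $n=3$ and $D=\{1\}$. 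The smallest admissible sequence $0\le i_1<i_2\le i_3$ is $(0,1,1)$, of weight $q^2$, not $q$. Your planned check at $n=2$ cannot catch this, because there $d=n-d$ for $d=1$. Summing over $T$ as you wrote it therefore gives $\sum_T t^{\des(T)}q^{\mathrm{comaj}(T)}$, not the $\maj$ generating function.

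The repair is short, but you need one of two extra inputs:
\begin{enumerate}
\item[(a)] Use the symmetry $s_\lambda(1,q,\ldots,q^r)=s_\lambda(q^r,\ldots,q,1)$ and specialize $x_j=q^{r+1-j}$. Each strict step at position $d$ then raises the first $d$ exponents, and you get exactly $t^{|D|}q^{\sum D}$.
\item[(b)] Keep your order of specialization and invoke Sch\"utzenberger's evacuation. It is a bijection of $\SYT(\lambda)$ with $\Des(\mathrm{evac}(T))=\{n-d: d\in\Des(T)\}$, so $(\des,\maj)$ and $(\des,\mathrm{comaj})$ are equidistributed on $\SYT(\lambda)$.
\end{enumerate}
With either amendment Step 3 is correct. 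Your $t$-bookkeeping ($r\mapsto r-|D|$, then Lemma~\ref{lem:Pn}) was already right, and the theorem follows as you describe.
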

\begin{proof}\!\!\footnote{I would like to thank Anton Mellit for explaining details of the following calculation.} 
Consider the decomposition $T_n = \bigoplus_{r\geq 0} (T_n)_r$, where we use the first grading.  Each graded piece $(T_n)_r$ is still graded by the second grading; use the notation $\dim_q$ for the corresponding graded dimensions. In the alternative model of the algebra $(T_n)$ introduced in~\eqref{explicitembedding}, 
we have an $S_n$-equivariant isomorphism
\[(T_n)_r \cong \C[\xi_1, \ldots, \xi_n]_{\leq r},\]
where the subscript $()_{\leq r}$ means terms of degree less than equal to $r$ in {\em each} of the indicated variables, and the action of $S_n$ is the standard one on the right hand side. 

As we will be using standard results in $S_n$-invariant theory, in this proof we will index Young subgroups of $S_n$ by partitions. So for a partition $\lambda = (\lambda_1\geq \ldots \geq \lambda_k\geq 1)\vdash n$, denote 
$S_\lambda=S_{\lambda_1}\times \ldots \times S_{\lambda_k}$ the standard Young subgroup of $S_n$. We then have 
\[ \cha_{q} (T_n)_r  =  \displaystyle\sum_{\lambda\vdash n} \dim_q((T_n)_r)^{S_\lambda} \cdot  m_\lambda.
\]
Further, clearly
\[(\C[\xi_1, \ldots, \xi_n]_{\leq r})^{S_\lambda} \cong \C[\xi_1,\ldots, \xi_{\lambda_1}]_{\leq r}^{S_{\lambda_1}} \otimes \ldots \otimes \C[\xi_{\lambda_1+\ldots+\lambda_{k-1}+1},\ldots, \xi_{n}]_{\leq r}^{S_{\lambda_k}}.
\]
For a single set of variables, say the first one, we have 
\[\begin{array}{rcl}\dim_q \C[\xi_1,\ldots, \xi_{\lambda_1}]_{\leq r}^{S_{\lambda_1}} & = & \displaystyle\sum_{0\leq a_1\leq \ldots\leq a_k\leq r} q^{a_1+\ldots + a_k}\\ \\
& = & h_{\lambda_1}(1,q,\ldots, q^r).
\end{array}
\]
So we deduce
\[\begin{array}{rcl}
\dim_q((T_n)_r)^{S_\lambda} & = & \displaystyle\prod_{i=1}^k h_{\lambda_i}(1,q, \ldots, q^r) \\
& = & h_\lambda[[r+1]_q],
\end{array}
\]
and hence
%\[\begin{array}{rcl} \cha_{q} (T_n)_r & = & \displaystyle\sum_{\lambda\vdash n} \dim_q((T_n)_r)^{S_\lambda} \cdot  m_\lambda \\
%& = & \displaystyle\sum_{\lambda\vdash n} h_\lambda[1+q+\ldots +q^r] \cdot m_\lambda.
%\end{array}\]
\[ \cha_{q} (T_n)_r = \displaystyle\sum_{\lambda\vdash n} h_\lambda[[r+1]_q] \cdot m_\lambda.
\]
On the other hand, using ${\bf z}$ as a new set of dummy variables for plethystic substitution, using standard identities we get
\[ \begin{array}{rcl}
\displaystyle\sum_{\lambda\vdash n} h_\lambda[[r+1]_q] \cdot m_\lambda({\bf z})  & = & h_n\left[ [r+1]_q {\bf z} \right]\\
& = & \displaystyle\sum_{\lambda\vdash n}s_\lambda \left([r+1]_q \right) s_\lambda({\bf z}). 
\end{array}\]
We obtain
\[\cha_{q} (T_n)_r =  \sum_{\lambda\vdash n}s_\lambda([r+1]_q)s_\lambda\]
and thus, summing over all $r$,
\[\cha_{t,q} T_n =  \sum_{r\geq 0} t^r \sum_{\lambda\vdash n}s_\lambda([r+1]_q)s_\lambda. \]
For the finite-dimensional quotient $P_n$, we hence get 
\[\cha_{t,q} P_n = \prod_{j=0}^n(1-t q^j)\left(\sum_{r\geq 0} t^r \sum_{\lambda\vdash n}s_\lambda([r+1]_q)s_\lambda \right). 
\] 
Standard algebraic manipulations as in~\cite[Proof of Prop.3.14]{RSW} conclude the proof. 
\end{proof}
Note that this result fits well with the specialization picture of Theorem~\ref{thm_main_def}, giving a bigraded refinement of the classical Lusztig--Stanley formula~\eqref{eq:StanLusz} for the (singly) graded character of the coinvariant algebra. 

\begin{corollary}\label{cor:Pn} We have \[\pa_n^{S_n} \cong \C[\te_0, \ldots, \te_n].\]
\end{corollary}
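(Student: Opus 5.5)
The plan is to combine the regular sequence property of Proposition~\ref{prop_reg} with the character formula of Theorem~\ref{thm:char} in a Hilbert series comparison. By Proposition~\ref{prop_reg}, $\te_0,\ldots,\te_n$ is a regular sequence in $\pa_n$ of maximal length. A regular sequence of homogeneous elements of positive degree is algebraically independent, so the subalgebra $B=\C[\te_0,\ldots,\te_n]\subseteq \pa_n^{S_n}$ is a polynomial ring with
\[
\dim_{t,q} B=\prod_{j=0}^n(1-tq^j)^{-1}.
\]
It therefore remains to prove that $B$ exhausts the invariants, and I will do this by showing $\dim_{t,q}\pa_n^{S_n}=\dim_{t,q} B$.

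First I show that $\pa_n$ is a free $B$-module. Since $\pa_n$ is a finitely generated graded module over the polynomial ring $B$, and the generators of $B$ form a regular sequence on it, $\pa_n$ is Cohen--Macaulay over $B$ of full depth. Graded Auslander--Buchsbaum then gives that $\pa_n$ is free over $B$, and graded Nakayama gives $\pa_n\cong B\otimes_\C P_n$. Choosing an $S_n$-stable graded complement of $B_+\pa_n$ (possible by averaging, as we are in characteristic zero), this becomes an isomorphism $\pa_n\cong B\otimes P_n$ of bigraded $S_n$-modules, where $S_n$ acts trivially on $B$. This is exactly the multiplicativity of characters already used in the proof of Theorem~\ref{thm:char}.

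Taking $S_n$-invariants, I get
\[
\dim_{t,q}\pa_n^{S_n}=\dim_{t,q}B\cdot\dim_{t,q}P_n^{S_n}.
\]
By Theorem~\ref{thm:char}, $\dim_{t,q}P_n^{S_n}$ is the coefficient of $s_{(n)}$. There is a unique standard Young tableau of shape $(n)$, and it has no descents, so this coefficient is $1$. Hence $\dim_{t,q}\pa_n^{S_n}=\dim_{t,q}B$, and together with $B\subseteq\pa_n^{S_n}$ this gives equality.

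The only step needing care is the equivariant splitting $\pa_n\cong B\otimes P_n$. An alternative that avoids it is a direct Hilbert series check. The proof of Theorem~\ref{thm:char} gives $\dim_q(\pa_n)_r^{S_n}=h_n[[r+1]_q]=\left[\begin{smallmatrix} r+n\\ n\end{smallmatrix}\right]_q$. By Lemma~\ref{lem:Pn}, summing these against $t^r$ gives $\prod_{j=0}^n(1-tq^j)^{-1}=\dim_{t,q}B$, which again shows $B=\pa_n^{S_n}$.
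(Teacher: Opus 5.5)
Your proof is correct. Your main route uses the same two inputs as the paper: the regular sequence of Proposition~\ref{prop_reg}, which makes $B=\C[\te_0,\ldots,\te_n]$ a polynomial subalgebra, and Theorem~\ref{thm:char}, which gives $\dim P_n^{S_n}=1$. The only difference is how you get from these to the conclusion. The paper leaves the step from $P_n^{S_n}=\C$ to $B=\pa_n^{S_n}$ implicit. The standard justification is graded Nakayama applied to the $B$-module $\pa_n^{S_n}$, using that averaging gives $(\pa_n/B_+\pa_n)^{S_n}=\pa_n^{S_n}/B_+\pa_n^{S_n}$. You instead make the step explicit through the equivariant splitting $\pa_n\cong B\otimes P_n$ followed by a Hilbert series comparison.

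Your closing alternative is a genuinely different and more economical route. It needs only three things: the algebraic independence of the $\te_j$, the orbit-sum count $\dim_q(\pa_n)_r^{S_n}=\left[\begin{smallmatrix} r+n\\ n\end{smallmatrix}\right]_q$ for monomials in the box, and Lemma~\ref{lem:Pn}. It therefore bypasses both the freeness argument and Theorem~\ref{thm:char} altogether. That matters because the paper's proof of Theorem~\ref{thm:char} ends by deferring to manipulations in \cite{RSW}, so this version of the corollary is self-contained.
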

\begin{proof} The elements $\te_0, \ldots, \te_n$ are $S_n$-invariant. They also form a regular 
sequence in $\pa_n$, so they generate a free polynomial subalgebra. On the other hand, 
the quotient 
\[P_n = \pa_n / \left\langle \te_0, \ldots, \te_n\right\rangle\]
has only a one-dimensional $S_n$-invariant subspace by Theorem~\ref{thm:char}, the subspace of constants. Hence $\C[\te_0, \ldots, \te_n]$ contains all $S_n$-invariant elements in $\pa_n$. 
\end{proof}

\subsection{Connection to the diagonal coinvariant algebra}
In this section, a ``nontrivially trivial'' connection between the projective coinvariant algebra $P_n$ and the Garsia--Haiman diagonal coinvariant algebra $D_n$ will be discussed. Recall that the diagonal coinvariant algebra is the bigraded algebra
\[ D_n = \C[a_1, \ldots, a_n, b_1, \ldots, b_n] /\langle \C[a_1, \ldots, a_n, b_1, \ldots, b_n]^{S_n}_+\rangle, 
\]
where the symmetric group $S_n$ acts on $\C[a_1, \ldots, a_n, b_1, \ldots, b_n]$ by permuting the two sets of variables simultaneously. The bigrading is defined by $\deg a_i=(1,0)$ and $\deg b_j=(0,1)$. The algebra $D_n$ has the structure of a bigraded $S_n$-module of dimension $(n+1)^{n-1}$~\cite{H}. 

The following statement is formulated in a somewhat tongue-in-cheek way, but the point is that (i) is easy and natural, whereas (ii) is a nontrivial statement. 

\begin{proposition} \label{prop:PnDn}
\begin{enumerate}
\item[(i)] There is a natural $S_n$-equivariant bigraded algebra homomorphism \[\varphi_n\colon P_n\to D_n.\] 
\item[(ii)] The homomorphism $\varphi_n$ is trivial, factoring into the composition $P_n\to P_n/(P_n)_+\cong\C\hookrightarrow D_n$. 
\end{enumerate}
\end{proposition}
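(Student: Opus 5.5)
For (i), I would use the alternative model \eqref{explicitembedding} of $\pa_n$, but with the single auxiliary variable $\zeta$ replaced by pairs of diagonal variables. Concretely, define $\Phi\colon \C[x_{\scriptscriptstyle I}\colon I\subseteq[n]]\to \C[a_1,\ldots,a_n,b_1,\ldots,b_n]$ by
\[
x_{\scriptscriptstyle I}\mapsto \prod_{i\in I} b_i\prod_{j\notin I} a_j .
\]
This is the Segre parametrisation with $[a_i:b_i]$ the coordinates on the $i$-th factor of $(\P^1)^n$. It kills $J_n$, since both $x_{\scriptscriptstyle I}x_{\scriptscriptstyle J}$ and $x_{\scriptscriptstyle I\cap J}x_{\scriptscriptstyle I\cup J}$ map to the same monomial. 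It is $S_n$-equivariant for the diagonal action.

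The bigradings do not match on the nose: $x_{\scriptscriptstyle I}$ has bidegree $(1,|I|)$, while its image has bidegree $(n-|I|,|I|)$. So I would regrade $D_n$ linearly, or equivalently note that $(r,e)\mapsto(rn-e,e)$ is an injective regrading, so that $\Phi$ is bigraded for the modified grading. Then $\te_k\mapsto e_k$-type polarised symmetric polynomials $\sum_{|I|=k}\prod_{I}b_i\prod_{I^c}a_j$. These are $S_n$-invariant of positive degree, hence zero in $D_n$. So $\Phi$ descends to $\varphi_n\colon P_n\to D_n$, which proves (i).

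For (ii), I must show that every $x_{\scriptscriptstyle I}$ maps to $0$ in $D_n$. The key point is that $\Phi(x_{\scriptscriptstyle I})$ has total degree $n$. So it suffices to show that the image of $\pa_n$, a subring generated in total degree $n$, meets $D_n$ only in degree $0$. My first attempt is to exploit that, in $D_n$, the product $m=\prod_{j\notin I}a_j\prod_{i\in I}b_i$ is a multilinear monomial using each index exactly once.

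I would then use the polarisation operators and the known structure of $D_n$. The space of multilinear polynomials in which each index $i$ appears once, through either $a_i$ or $b_i$, is spanned by the $S_n$-orbits of these monomials. Its image in $D_n$ lies in the bidegree-$(n-k,k)$ component, restricted to the isotypic piece detected by the torus $(\C^*)^n$ acting on the pairs $(a_i,b_i)$ by a common scaling.

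Concretely, I would multiply $m$ by the symmetric element $p=\sum_i a_i$ in a smart way. Alternatively I would show that $m$ lies in the ideal generated by the invariants, as follows. Write $\prod_{j}(a_j+ s b_j)=\sum_k s^k \te_k$-image, where every coefficient in $s$ is an invariant. Then the $(\C^*)^n$-weight-$(1,\ldots,1)$ part of $D_n$ is spanned by the classes of the multilinear monomials. A Schur-function or character computation, using the known Frobenius character of $D_n$ (Haiman) in the multidegree $(1,\ldots,1)$ of the $GL$-extended torus, would show that this weight space vanishes in positive degree. The weight space is governed by multilinear components, and by the $\nabla e_n$ formula these correspond to highest weights, which are bounded in a way that rules them out.

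I expect this last vanishing to be the main obstacle. The step I would try first is elementary: show directly that $\prod_{i=1}^n a_i\in\langle$invariants$\rangle$. This follows because $\prod_i a_i=e_n(a)$ is itself invariant. Next I would apply the polarisation operators $\sum_i b_i\partial_{a_i}$, which preserve the invariant ideal, to $e_n(a)$. This yields each $\sum_{|I|=k}\prod_I b_i\prod_{I^c}a_j$, but not the individual monomials, so the direct approach falls short of showing each $x_{\scriptscriptstyle I}\mapsto 0$. To close the gap I would use the following fact. An element of $D_n$ that is alternating-free and multilinear in each pair is in the $\mathfrak{sl}_2$-orbit of the pure-$a$ multilinear part. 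That pure-$a$ part lives in the ordinary coinvariant algebra $R_n$, where the multilinear monomial $a_1\cdots a_n$ has degree $n>\binom{n}{2}$ for $n\le 2$ only. So a more refined argument is needed. I would show by induction on $n$ that $a_1\cdots a_{n-1}\,x\in\langle$invariants$\rangle$ for all $x\in\{a_n,b_n\}$, using $a_n=-\sum_{i<n}a_i$ modulo $p_1$ and $a_i^2$-reductions.
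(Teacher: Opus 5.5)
Your part (i) is correct and is the paper's construction up to interchanging $a_i$ and $b_i$. The Segre monomials satisfy the relations in $J_n$, the map is bigraded after the linear regrading you describe, and each $\te_k$ goes to an $S_n$-invariant of positive degree, so the map descends to $P_n$.

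\textbf{Part (ii) is not proved.} You list several strategies, and none of them closes.
\begin{itemize}
\item The character argument uses a torus $(\C^*)^n$ that scales each pair $(a_i,b_i)$ separately. This torus does not preserve $\langle\C[a,b]^{S_n}_+\rangle$: it moves $\sum_i a_i$ to $\sum_i c_ia_i$. So $D_n$ has no such weight decomposition, and its ``weight-$(1,\ldots,1)$ part'' is undefined.
\item The symmetry $D_n$ actually carries is $S_n\times GL_2$. Under it, the span of your $2^n$ monomials is $V^{\otimes n}\cong\bigoplus_{\ell(\lambda)\le 2}S^\lambda\otimes V_\lambda$, where $V=\C^2$ and $V_\lambda$ is the $GL_2$-irreducible of highest weight $\lambda$.
\item Polarisation operators applied to $e_n(a)$ only reach the summand $\mathrm{Sym}^nV$, i.e.\ the images of the $\te_k$. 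That is why your elementary attempt stalls.
\item A correct character argument would have to show that, for each $j\ge 1$, the $GL_2$-type $(n-j,j)$ is absent from the $s_{(n-j,j)}$-isotypic part of $D_n$. That is a nontrivial statement about $\nabla e_n$, and you do not supply it.
\item The ``fact'' about the $\mathfrak{sl}_2$-orbit of the pure-$a$ part merely restates (ii). That part is $a_1\cdots a_n=e_n(a)$, which is already $0$ in $D_n$. In $\C[a,b]$ the orbit is only $\mathrm{Sym}^nV$, so there the claim is false.
\item The closing induction only targets $|I|\le 1$. It also relies on ``$a_i^2$-reductions'' that are unavailable, since $a_i^2\neq 0$ in $R_n$ for $n\ge 3$.
\end{itemize}

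\textbf{The missing idea} is to use the module structure of $D_n$ over $\C[a_1,\ldots,a_n]$, which is what the paper does.
\begin{enumerate}
\item Up to $S_n$-symmetry, every image is $a_1\cdots a_k\,b_{k+1}\cdots b_n$.
\item The monomial $b_{k+1}\cdots b_n$ is the lowest one of its degree in the Garsia--Stanton descent order. Hence Carlsson--Oblomkov~\cite[Thm.~B(c)]{CO} identify the cyclic $\C[a]$-submodule of $D_n$ it generates with the cyclic submodule of $R_n=\C[a]/\langle e_1(a),\ldots,e_n(a)\rangle$ generated by $a_{k+1}\cdots a_n$. The identification matches the two generators.
\item Multiplying by $a_1\cdots a_k$ gives the image of $a_1\cdots a_n=e_n(a)$, which is $0$ in $R_n$.
\item So each $x_I$ maps to $0$ in $D_n$. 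Since the $x_I$ generate $(P_n)_+$, $\varphi_n$ kills $(P_n)_+$.
\end{enumerate}
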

\begin{proof} The homomorphism~$\varphi_n$ was essentially defined already earlier in the paper: for $I\subseteq [n]$, let  
\[\tilde\varphi_n\colon x_{\scriptscriptstyle I} \mapsto \prod_{i\in I} a_i\prod_{j\not\in I}b_j. 
\]
The relations in $I_n$ are satisfied in~$\C[a_1, \ldots, a_n, b_1, \ldots, b_n]$ (this is just the Segre embedding again!), so we get an algebra homomorphism $\widetilde{\varphi}_n\colon \pa_n\to \C[a_1, \ldots, a_n, b_1, \ldots, b_n]$, that is bigraded under a re-scaling of the bigrading on $T_n$. The elements $\te_i$ are mapped to $S_n$-invariant elements in the target, and hence $\widetilde{\varphi}_n$ descends to an $S_n$-equivariant bigraded morphism $\varphi_n\colon P_n\to D_n$, proving (i). 

The nontrivial part is (ii), the statement that this natural homomorphism is actually trivial: elements of the form $\prod_{i\in I} a_i\prod_{j\not\in I}b_j$ for $I\subseteq [n]$ are all zero in the diagonal coinvariant algebra $D_n$. I~see no easy reason for this fact, observed first in computer calculations\footnote{I thank Erik Carlsson for explaining this proof.}. 

We may assume for simplicity that $I=\{1,\ldots,k\}$. The main claim is that if we regard $D_n$ as a module over $\C[a_1,\ldots,a_n]$, then the submodule spanned by $b_{k+1}\cdot\ldots\cdot b_n$ is isomorphic to the submodule of the usual coinvariant algebra $R_n$ on the generators $\{a_1, \ldots, a_n\}$ spanned by $a_{k+1}\cdot\ldots\cdot a_{n}$. The vanishing statement then follows, because if we multiply that submodule by $a_1\cdot\ldots a_k$, we clearly get $0$.

The reason for the claim is that the product $b_{k+1}\cdot\ldots\cdot b_{n}$ is the monomial of degree $n-k$ that is lowest in the Garsia--Stanton descent order. The statement then follows from~\cite[Thm. B, c)]{CO}, applied at $F_a=F_{(0,\ldots,0,1,\ldots,1)}$. 
\end{proof}

\begin{remark} \label{rem:PnDn} The three finite-dimensional algebras $R_n, P_n, D_n$ are connected by the schematic diagram
\[ R_n \rightsquigarrow P_n \stackrel{\varphi_n}\longrightarrow D_n, \]
where $\rightsquigarrow$ denotes specialisation. These algebras arise from the $S_n$-equivariant coordinate geometry, respectively, of $(\A^1)^n$, $(\P^1)^n$ and $(\A^2)^n$. So it makes sense that there are connections between them. However, the triviality of the natural homomorphism $\varphi_n$ remains mysterious to me. 
\end{remark}

\section{Young subgroups and general Segre embeddings}

\subsection{The bigraded coordinate ring of a general Segre embedding}\label{sec:Talpha} Consider a composition $\alpha \vDash n$, and
let $\P_\alpha = \P^{\alpha_1}\times \ldots \times \P^{\alpha_k}$ with 
$\P^{\alpha_i}=\Proj \C\left[z_0^{(i)},\ldots, z_{\alpha_i}^{(i)}\right]$.
Let also 
\[
N_\alpha = |{\mathcal P}(M_\alpha)| - 1 = \prod_{i=1}^k(\alpha_i+1) -1
\]
and 
%\[\P^{N_\alpha} = \Proj \C\left[y_{j_1\ldots j_k} \colon {0\leq j_i\leq \alpha_i}\right]. \]
\[\P^{N_\alpha} = \Proj \C\left[y_{I} \colon {I\subset M_\alpha}\right],\]
the variables being indexed by the set of subsets ${\mathcal P}(M_\alpha)$ of the multiset $M_\alpha$.
The Segre embedding
\[ s_\alpha\colon \P_\alpha = \P^{\alpha_1}\times \ldots \times \P^{\alpha_k}\rightarrow \P^{N_\alpha}
\]
is defined by the dual map on coordinate rings given by 
\[
y_{\scriptscriptstyle I}\mapsto z_{j_1}^{(1)}z_{j_2}^{(2)}\ldots z_{j_k}^{(k)} \mbox { for } I=\{1^{j_1}\ldots k^{j_k}\}\subset M_\alpha \mbox{ with } 0\leq j_i\leq \alpha_i. 
\]
%\[ \left([z_0^{(1)}:\ldots : z_{\alpha_1}^{(1)}], \ldots,[z_0^{(k)}:\ldots : z_{\alpha_k}^{(k)}] \right)\mapsto \left[\ldots\colon \left(z_{j_1}^{(1)}z_{j_2}^{(2)}\ldots z_{j_k}^{(k)}\right) : \ldots\right]_{0\leq j_i\leq \alpha_i}.\]
Let 
%\[\pa_\alpha  = \C\left[y_{j_1\ldots j_k} \colon {0\leq j_i\leq \alpha_i}\right]/\langle y_{j_1\ldots j_i\ldots j_k} y_{j'_1\ldots j'_i\ldots j'_k} - y_{j_1\ldots j'_i \ldots j_k} y_{j'_1\ldots j_i\ldots j'_k}\colon {0\leq j_i, j_i'\leq \alpha_i}\rangle\]
\[\pa_\alpha  = \C\left[y_I \colon I\subset M_\alpha\right]/\langle y_{\scriptscriptstyle I_1} y_{\scriptscriptstyle I_2} - y_{\scriptscriptstyle I_3} y_{\scriptscriptstyle I_4}\colon (I_1,I_2)\sim (I_3,I_4)\rangle\]
be the projective coordinate algebra of the image of the Segre embedding $s_\alpha$. Here the equivalence relation 
$(I_1,I_2)\sim (I_3,I_4)$ on pairs of multisets with $I_m=(1^{j_{1,m}}\ldots k^{j_{k,m}})\subset M_\alpha$ is defined as follows: for all $1\leq i\leq k$, we want $\{ j_{i,1}, j_{i,2}\} = \{j_{i,3}, j_{i,4} \}$ in the sense of multisets. Compare~\cite[Section 4]{ORS}.

The algebra $T_\alpha$ can be given a bigrading as follows: 
%the Segre coordinate $y_{j_1,\ldots, j_k}$ in the target $\P^{N_\alpha}$ has bidegree $(1,\sum_{i=1}^k j_i)$. 
the Segre coordinate $y_{\scriptscriptstyle I}$ in the target $\P^{N_\alpha}$ has bidegree $(1,|I|)$. 
It is immediately seen that this bigrading is compatible with the defining ideal of the image of Segre embedding. 

\begin{remark} A perhaps more familiar way to index the Serge variables would be
\[\P^{N_\alpha} = \Proj \C\left[y_{j_1\ldots j_k} \colon {0\leq j_i\leq \alpha_i}\right], \]
with the Segre embedding given by 
\[ \left([z_0^{(1)}:\ldots : z_{\alpha_1}^{(1)}], \ldots,[z_0^{(k)}:\ldots : z_{\alpha_k}^{(k)}] \right)\mapsto \left[\ldots\colon \left(z_{j_1}^{(1)}z_{j_2}^{(2)}\ldots z_{j_k}^{(k)}\right) : \ldots\right]_{0\leq j_i\leq \alpha_i}.\]
This is easily seen to be equivalent to the above formalism, which fits better with the spirit of this paper. 
\end{remark}

Once again, it will be useful to have an alternative model of this algebra, similar to the one given in~\eqref{explicitembedding} for $T_n$. Given a composition $\alpha =(\alpha_1, \ldots, \alpha_k)\vDash n$, 
consider the free bigraded commutative algebra
$\C[\zeta,\xi_{i,l} \colon 1\leq i \leq k, 1\leq l \leq \alpha_i]$ with $\deg(\zeta) =(1,0)$ and $\deg(\xi_{i,l})=(0,1)$. Then once again, the bigraded algebra $T_\alpha$ is isomorphic to a bigraded subalgebra of 
$\C[\zeta,\xi_{i,l}]$, the image of the embedding
\begin{equation} \begin{array}{rcl} T_\alpha & \hookrightarrow & \C[\zeta, \xi_{i,l}]\\
y_{\scriptscriptstyle I}&\mapsto& \zeta\cdot \displaystyle\prod_{i=1}^k \xi_{i, j_i}  \mbox { for } I=(1^{j_1}\ldots k^{j_k})\subset M_\alpha,
\end{array}\label{generalexplicitembedding}
\end{equation}
where we set $\xi_{i,0}=1$ if $j_i=0$. 
Indeed the relations $y_{\scriptscriptstyle I_1} y_{\scriptscriptstyle I_2} = y_{\scriptscriptstyle I_3} y_{\scriptscriptstyle I_4}$ for $(I_1,I_2)\sim (I_3,I_4)$ are again automatically satisfied under this map.
\begin{example}\label{ex21_ring} Consider the first non-trivial case $\alpha=(2,1)\vDash 3$, corresponding to
$\P_{(2,1)}=\P^2\times\P^1$. Then 
$M_{(2,1)}=\{1,1,2\}$. Then we get the familiar $\C$-algebra
\[\pa_{(2,1)}  = \C[y_\emptyset,y_{\{1\}}, y_{\{1,1\}}, y_{\{2\}}, y_{\{1,2\}}, y_{\{1,1,2\}}]\Big/ \left\langle\mathrm{rank}\left|\begin{array}{ccc} y_\emptyset & y_{\{1\}} & y_{\{1,1\}} \\ y_{\{2\}} & y_{\{1,2\}}& y_{\{1,1,2\}} \end{array}\right|\leq 1\right\rangle,\]
the well-known coordinate algebra of the the standard Segre embedding $\P^2\times\P^1\hookrightarrow\P^5$. One relation is $y_{\{1\}}y_{\{1,1,2\}} = y_{\{1,1\}}y_{\{1,2\}}$, 
corresponding to the equivalent pairs of multi-subsets $(\{1\}, \{1^2, 2\})\sim (\{1^2\}, \{1,2\})$ of $M_\alpha=\{1^2,2\}$.
\end{example}

\subsection{The bigraded Hilbert series}
\begin{theorem} For any composition $\alpha \vDash n$, the bigraded Hilbert series of the algebra $\pa_\alpha$ is given by
\[
\dim_{t,q}  \pa_\alpha=  \frac{A_\alpha(t,q)}{\prod_{j=0}^n (1-tq^j)},
\]
the numerator being the $(\des, \maj)$-generating function of words $W_\alpha$ in the multiset $M_\alpha$. \label{thm:bigradedPa}
\end{theorem}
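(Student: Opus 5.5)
The plan is to compute the bigraded Hilbert series of $\pa_\alpha$ one first-degree piece at a time, using the explicit embedding~\eqref{generalexplicitembedding}, and then apply MacMahon's identity~\eqref{eq:mmcarlitz}.

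The first step is to identify the image of $\pa_\alpha$ in $\C[\zeta,\xi_{i,l}]$. Write $\eta_{i,j}=\xi_{i,j}$, so that $y_I\mapsto \zeta\prod_i \eta_{i,j_i}$ with $\eta_{i,0}=1$. Because the $y_I$ are bihomogeneous and $\pa_\alpha$ is a domain (it is the coordinate ring of the irreducible Segre variety), the map~\eqref{generalexplicitembedding} is injective. Its image in first degree $r$ is then the span of products of $r$ such monomials.

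The key observation is that this image factors over the blocks. Since each $y_I$ picks exactly one factor $\eta_{i,j_i}$ (with $0\le j_i\le\alpha_i$) from each block $i$, the degree-$r$ piece is spanned by
\[
\zeta^r\prod_{i=1}^k\bigl(\text{a product of $r$ elements of }\{1,\eta_{i,1},\ldots,\eta_{i,\alpha_i}\}\bigr).
\]
Conversely, any such tuple of per-block choices is realised, by pairing the $m$-th choice in each block into a single $y_I$. Hence
\[
(\pa_\alpha)_r\cong\bigotimes_{i=1}^k \C[\P^{\alpha_i}]_r
\]
as $q$-graded spaces. Here the $i$-th factor is the degree-$r$ part of $\C[z^{(i)}_0,\ldots,z^{(i)}_{\alpha_i}]$ with $z^{(i)}_j$ of $q$-degree $j$; this matches the $q$-degree $|I|=\sum_i j_i$ of $y_I$. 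Distinct monomials in the $z$'s give distinct monomials in the $\eta$'s, since the $\eta_{i,j}$ are independent variables, so the dimensions multiply.

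Next I apply Lemma~\ref{lem:Pn} to each factor. It gives $\dim_q\C[\P^{\alpha_i}]_r=\left[\begin{smallmatrix} r+\alpha_i\\ \alpha_i\end{smallmatrix}\right]_q$, and therefore
\[
\dim_{t,q}\pa_\alpha=\sum_{r\ge0}t^r\prod_{i=1}^k\left[\begin{array}{c} r+\alpha_i\\ \alpha_i\end{array}\right]_q .
\]
MacMahon's identity~\eqref{eq:mmcarlitz} turns the right-hand side into $A_\alpha(t,q)/\prod_{j=0}^n(1-tq^j)$, which is the claimed formula.

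The main obstacle is justifying that $\pa_\alpha$, as defined by the quadratic binomial relations $\sim$, really is the image of~\eqref{generalexplicitembedding}. In other words, one needs that these binomials generate the full Segre ideal. This is the classical fact that the Segre ideal is generated by $2\times2$ minors, and these are exactly the exchange relations in $\sim$. I would cite it rather than reprove it. Equivalently, $\pa_\alpha$ is the polytope algebra of the product of simplices $\Delta_{\alpha_1}\times\cdots\times\Delta_{\alpha_k}$, which is normal, so lattice points in $r\Sigma$ correspond bijectively to degree-$r$ monomials. With that settled, the remaining steps are routine counting.
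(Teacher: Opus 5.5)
Your proposal is correct and is essentially the paper's argument. The paper identifies $\pa_\alpha$ with the polytope algebra of $\Delta_{\alpha_1}\times\cdots\times\Delta_{\alpha_k}$ and $q$-counts the lattice points of $r\Delta_\alpha$ as a product of the counts for the $r\Delta_{\alpha_i}$ via Lemma~\ref{lem:Pn}, then applies MacMahon's identity; this is your blockwise factorisation $(\pa_\alpha)_r\cong\bigotimes_i\C[\P^{\alpha_i}]_r$ in Ehrhart language. You also make explicit a point the paper uses tacitly, namely that the quadratic binomials defining $\pa_\alpha$ generate the full Segre ideal (equivalently, that $\pa_\alpha$ really is the polytope algebra). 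You are right to flag this and to settle it by citing the classical result.
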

\begin{proof} Use refined Ehrhart theory. The projective
coordinate ring of the product space $\P_\alpha$ is the polytope algebra $\pa_{\Delta_\alpha}$ of the product
polytope $\Delta_\alpha=\Delta_{\alpha_1} \times \ldots \times \Delta_{\alpha_k}\subset\R^n$. We 
%are interested in the generating function of $q$-counts of lattice points of dilates of $\Delta_\alpha$, where a lattice point 
%\[\left(l^{(1)}_1,\ldots, l^{(1)}_{\alpha_1}, l_1^{(2)}, \ldots, l_{\alpha_k}^{(k)}\right)\in \Z^n\]
%has $q$-weight
%\[\sum_{i=1}^k \sum_{j=0}^{\alpha_i} j\cdot l_j^{(i)}.\]
have 
\[\dim_{t,q}\pa_\alpha = E^\ba_{\Delta_\alpha}(t,q)\]
for the grading vector \[\ba=(1,\ldots, \alpha_1, 1,\ldots, \alpha_2, \ldots, 1, \ldots, \alpha_k)\in\Z^n.\]
On the other hand, we clearly have
\[r\Delta_\alpha=(r\Delta_{\alpha_1}) \times \ldots \times (r\Delta_{\alpha_k})\subset\R^n,\]
and so, invoking also Lemma~\ref{lem:Pn}, we see that the $q$-count of the 
number of lattice points in $r\Delta_\alpha$ is given by 
\[L^\ba_{\Delta_\alpha,r}(q) =\prod_{i=1}^k  \left[\begin{array}{c} r+\alpha_i\\ \alpha_i\end{array}\right]_q.\]
Hence we get 
\[ 
\dim_{t,q}  \pa_\alpha=\sum_{r=0}^\infty \left(\prod_{i=1}^k  \left[\begin{array}{c} r+\alpha_i\\ \alpha_i\end{array}\right]_q\right) t^r. 
\]
We conclude by invoking MacMahon's identity~\eqref{eq:mmcarlitz}.
\end{proof}
Setting $q=1$, we get the classical Hilbert series of the Segre embedding, with each Segre-variable having degree $1$. We thus deduce the following formula, which was proved earlier by Morales in the unpublished~\cite{morales}, and can also be deduced from~\cite{ORS}.
\begin{corollary} \label{cor:nemcomb}
For any composition $\alpha \vDash n$, the (classical) Hilbert series of the coordinate algebra $\pa_\alpha$ of a general Segre embedding $s_\alpha\colon \P_\alpha \rightarrow \P^{N_\alpha}$ is given by the formula 
\[
\dim_t \pa_\alpha=\frac{\displaystyle\sum_{w\in W_\alpha}t^{\des(w_\alpha)}}{(1-t)^{n+1}}. 
\]
\end{corollary}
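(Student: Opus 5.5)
This follows by specialising Theorem~\ref{thm:bigradedPa} at $q=1$. The plan is to check that this specialisation makes sense on both sides and then read off the resulting identity.

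First, on the left-hand side, $\pa_\alpha$ has finite-dimensional bigraded pieces. Each first-degree piece $(\pa_\alpha)_r$ is finite-dimensional, since it is spanned by degree-$r$ monomials in the finitely many variables $y_I$. Its second grading is concentrated in degrees $0,\dots,rn$. Hence the inner $q$-series is a polynomial, and
\[
\dim_t \pa_\alpha = \sum_{r\ge0}\dim(\pa_\alpha)_r\, t^r = \left(\dim_{t,q}\pa_\alpha\right)\big|_{q=1}.
\]
This equals the classical Hilbert series of the Segre image, because the first grading gives every $y_I$ degree~$1$.

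Second, on the right-hand side, specialising the formula of Theorem~\ref{thm:bigradedPa} at $q=1$ is legitimate coefficientwise in $t$. To see this, expand each factor as $(1-tq^j)^{-1}=\sum_m t^mq^{jm}$. Then the coefficient of each $t^r$ on both sides is a polynomial in $q$, so setting $q=1$ commutes with the identity. At $q=1$ the denominator $\prod_{j=0}^n(1-tq^j)$ becomes $(1-t)^{n+1}$. The numerator $A_\alpha(t,q)$ becomes $A_\alpha(t,1)=\sum_{w\in W_\alpha}t^{\des(w)}$.

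Combining the two steps gives the stated formula. No step is a genuine obstacle; the only point needing care is the justification of the coefficientwise specialisation in the second step, which the polynomiality of each $t^r$-coefficient settles.
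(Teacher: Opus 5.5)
Your proposal is correct and follows the paper's argument exactly: specialise Theorem~\ref{thm:bigradedPa} at $q=1$, using that the first grading gives every Segre variable $y_{\scriptscriptstyle I}$ degree $1$. The paper leaves implicit why the specialisation is legitimate; your justification, that every $t^r$-coefficient is a polynomial in $q$ so evaluating at $q=1$ commutes with the identity, fills that in correctly.
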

The resulting coefficients in the numerator are sometimes called Simon Newcomb numbers. 

\begin{example}\label{ex21} Let us continue Example~\ref{ex21_ring} with $\alpha=(2,1)\vDash 3$ and  
$M_{(2,1)}=\{1,1,2\}$. The set of words in this multiset is \[W_{(2,1)} = \{112, 121, 211\}.\] We get the generating function
\[A_{(2,1)}(t,q) = 1 + tq + tq^2.\]
Hence the bigraded Hilbert function of the embedding $\P^2\times\P^1\hookrightarrow \P^5$ is 
\[
\dim_{t,q}  \pa_{(2,1)}=  \frac{1 + tq + tq^2}{(1-t)(1-tq)(1-tq^2)(1-tq^3)}.
\]
The corresponding singly graded expression is 
\[
\dim_{t}  \pa_{(2,1)}=  \frac{1 + 2t}{(1-t)^4} = \frac{1 - 3 t^2 + 2 t^3}{(1-t)^6},
\]
corresponding to the codimension $2$ determinantal embedding $\P^2\times\P^1\hookrightarrow \P^5$ in Hilbert--Burch format. 
\end{example}

\begin{example}\label{ex22} Consider the case $\alpha=(2,2)\vDash 4$. Then 
$M_{(2,2)}=\{1,1,2,2\}$ and \[W_{(2,2)} = \{1122, 1212, 1221, 2112, 2121, 2211\}.\] We get the bigraded Hilbert function
\[
\dim_{t,q}  \pa_{(2,2)}=  \frac{1 + t(q+2q^2+q^3) + t^2q^4}{(1-t)(1-tq)(1-tq^2)(1-tq^3)(1-tq^4)}
\]
with palindromic numerator. The singly graded expression is 
\[
\dim_{t}  \pa_{(2,2)}=  \frac{1 + 4t+t^2}{(1-t)^5} = \frac{1 - 9 t^2 + 16 t^3 - 9t^4+t^6}{(1-t)^9},
\]
corresponding to the standard codimension $4$ embedding $\P^2\times\P^2\hookrightarrow \P^8$. 
\end{example}
We will need a generalisation of Proposition~\ref{prop_reg}. For $0\leq l\leq n$, denote
%\[\tf_l= \sum_{\substack{j_1,\ldots, j_k \\ \sum j_i=l}} y_{j_1\ldots j_k}\in \pa_\alpha. \]
\[\tf_l= \sum_{\substack{I \subset M_\alpha \\ |I|=l}} y_{\scriptscriptstyle I}\in \pa_\alpha. \]
Note that $\tf_0=y_{\scriptscriptstyle\emptyset}$, $\tf_n = y_{\scriptscriptstyle M_\alpha}$, and that the element $\tf_l\in\pa_\alpha$ is homogeneous of bidegree $(1,l)$.

\begin{proposition} For any composition $\alpha \vDash n$, the elements $\tf_0, \ldots, \tf_n$ form a regular sequence of maximal length in the bigraded algebra~$\pa_\alpha$. 
\end{proposition}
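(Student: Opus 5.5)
Since $\pa_\alpha$ is the normal toric ring of the lattice polytope $\Delta_\alpha$ (a product of unimodular simplices), Hochster's theorem makes it a Cohen--Macaulay domain of Krull dimension $\dim \P_\alpha + 1 = n+1$. For a Cohen--Macaulay positively graded algebra, a sequence of $n+1 = \dim \pa_\alpha$ homogeneous elements of positive degree is regular if and only if it is a homogeneous system of parameters. Equivalently, the quotient $\pa_\alpha/\langle \tf_0,\ldots,\tf_n\rangle$ must be finite-dimensional, i.e.\ the $\tf_l$ must have no common zero on $\Proj \pa_\alpha = \P_\alpha$. So the plan is to reduce the statement to a geometric emptiness check on the product of projective spaces, exactly as in the proof of Proposition~\ref{prop_reg} for the cube $\alpha=(1^n)$.

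To check emptiness, I use the embedding~\eqref{generalexplicitembedding}, or better the Segre coordinates. Pull $\tf_l$ back to $\P_\alpha$: it becomes
\[
F_l = \sum_{j_1+\cdots+j_k=l} z^{(1)}_{j_1}\cdots z^{(k)}_{j_k}.
\]
Encode each factor by the polynomial $p_i(u)=\sum_{j=0}^{\alpha_i} z^{(i)}_j u^j$ of degree at most $\alpha_i$. Then $F_l$ is the coefficient of $u^l$ in the product $p_1(u)\cdots p_k(u)$, a polynomial of degree at most $n$. A common zero of all $F_0,\ldots,F_n$ would mean $p_1\cdots p_k\equiv 0$. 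Since $\C[u]$ is a domain, some $p_i$ would have to vanish identically, contradicting $[z^{(i)}]\in\P^{\alpha_i}$. Hence the common zero locus in $\P_\alpha$ is empty. The affine cone $\Spec\pa_\alpha$ then meets $V(\tf_0,\ldots,\tf_n)$ only at the vertex, so the quotient is Artinian.

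It remains to conclude regularity and maximal length. The $n+1$ elements $\tf_l$ have bidegree $(1,l)$, hence positive first degree, and they form a homogeneous system of parameters in the Cohen--Macaulay ring $\pa_\alpha$. Therefore they form a regular sequence, and its length equals $\dim\pa_\alpha = n+1$, which is maximal. Regularity of a homogeneous sequence in a graded Cohen--Macaulay ring does not depend on the order, so no ordering issues arise.

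As a consistency check, the quotient then has Hilbert series $\dim_{t,q}\pa_\alpha \cdot \prod_{j=0}^n(1-tq^j) = A_\alpha(t,q)$ by Theorem~\ref{thm:bigradedPa}. Its value at $t=q=1$ is ${n\choose\alpha}$, as expected.

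The main point requiring care is the Cohen--Macaulay input. If one prefers not to cite Hochster, an alternative is that $\pa_\alpha$ is the Veronese-type invariant ring $\C[\zeta,\xi_{i,l}]$-subring, equivalently the Segre product of polynomial rings, each of which is Cohen--Macaulay of dimension $\geq 2$. Segre products of Cohen--Macaulay graded rings with $a$-invariant negative are Cohen--Macaulay by Goto--Watanabe. Either citation suffices; the emptiness computation via the product of polynomials $p_i(u)$ is the genuinely new step and is straightforward.
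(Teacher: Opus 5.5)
Your proof is correct. The paper does not actually argue this statement: its proof only refers to \cite[Section~4]{ORS} and says that the argument of \cite[Prop.~2.7]{AS} for the cube carries over. So your phrase ``exactly as in the proof of Proposition~\ref{prop_reg}'' points to something the paper also only cites.

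Your proposal is therefore a self-contained replacement for those references, along the standard route. First, $\pa_\alpha$ is Cohen--Macaulay of dimension $n+1$. This follows from Hochster, since the paper itself identifies $\pa_\alpha$ with the normal Ehrhart ring of $\Delta_\alpha$, or from Hochster--Roberts, since $\pa_\alpha$ is the invariant ring of a $(k-1)$-dimensional torus acting on $\C[z^{(i)}_j]$. Consequently, $n+1$ bihomogeneous elements of positive first degree form a regular sequence exactly when the quotient is Artinian, i.e.\ when $V_+(\tf_0,\ldots,\tf_n)\subset\P_\alpha$ is empty.

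Your check of this emptiness is the real content, and it is clean. The pullbacks $F_l$ are the coefficients of $p_1(u)\cdots p_k(u)$. So $\tf_0,\ldots,\tf_n$ are the coordinates of the polynomial multiplication map $\P_\alpha\to\P^n$, which has no base points because $\C[u]$ is a domain.

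Compared with the citations, your argument is uniform in $\alpha$ and needs no case-specific computation. It also explains why the $\tf_l$ are the natural parameters. It matches the map $\omega_\alpha$ in the proof of Theorem~\ref{thm:Palpha_def}, which sends $\xi_{i,l}$ to elementary symmetric functions; for $\alpha=(1^n)$ the multiplication map is just the quotient $(\P^1)^n\to\mathrm{Sym}^n\P^1\cong\P^n$.

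The Goto--Watanabe alternative you mention is valid but unnecessary. Its phrasing via the model \eqref{generalexplicitembedding} is garbled; the clean statement is simply that $\pa_\alpha$ is the Segre product of the polynomial rings $\C[z^{(i)}_0,\ldots,z^{(i)}_{\alpha_i}]$.
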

\begin{proof} See~\cite[Section 4]{ORS}. The explicit proof of~\cite[Prop.2.7]{AS} also carries over with minimal changes. 
\end{proof}
Define
\[
P_\alpha = T_\alpha/\langle \tf_0,\ldots, \tf_n\rangle, 
\]
a bigraded Artinian Cohen--Macaulay algebra which (without its bigrading) was first studied in~\cite{ORS}.

\begin{corollary}\label{cor:Palpha} For any composition $\alpha \vDash n$, the bigraded Hilbert series of the algebra $P_\alpha$ is given by
\[
\dim_{t,q}  P_\alpha=  A_\alpha(t,q).
\]
\end{corollary}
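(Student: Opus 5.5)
The plan is to combine Theorem~\ref{thm:bigradedPa} with the preceding Proposition, using the standard behaviour of Hilbert series under quotients by homogeneous nonzerodivisors. Each $\tf_l$ is bihomogeneous of bidegree $(1,l)$, so for any bigraded $\pa_\alpha$-module $M$ on which $\tf_l$ is a nonzerodivisor there is a bigraded short exact sequence
\[
0\longrightarrow M(-1,-l)\xrightarrow{\ \cdot\tf_l\ } M \longrightarrow M/\tf_l M\longrightarrow 0,
\]
giving $\dim_{t,q}(M/\tf_lM) = (1-tq^l)\dim_{t,q} M$. All bigraded pieces of $\pa_\alpha$ are finite-dimensional, because it is a quotient of a polynomial ring whose generators have first degree $1$. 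Hence these formal power series identities make sense.

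Applying this inductively along the regular sequence $\tf_0,\tf_1,\ldots,\tf_n$, with $M_l=\pa_\alpha/\langle\tf_0,\ldots,\tf_{l-1}\rangle$, gives
\[
\dim_{t,q}P_\alpha=\prod_{j=0}^n(1-tq^j)\cdot\dim_{t,q}\pa_\alpha .
\]
Regularity, supplied by the Proposition, is exactly what guarantees that multiplication by $\tf_l$ is injective on $M_l$ at each step. Substituting the formula of Theorem~\ref{thm:bigradedPa}, the denominator $\prod_{j=0}^n(1-tq^j)$ cancels and we obtain $\dim_{t,q}P_\alpha=A_\alpha(t,q)$.

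There is no serious obstacle once the regularity statement is granted. The only point to check is the bigrading bookkeeping: the shift for $\tf_l$ must be $(1,l)$, so that the product of the factors $(1-tq^l)$ for $l=0,\ldots,n$ is precisely the denominator in Theorem~\ref{thm:bigradedPa}. As a consistency check, this shows $P_\alpha$ is finite-dimensional of total dimension $A_\alpha(1,1)=|W_\alpha|={n\choose\alpha}$. This matches the expected Artinian quotient, since $\pa_\alpha$ has Krull dimension $n+1$ and the sequence has length $n+1$.
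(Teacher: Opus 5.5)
Your proposal is correct and is the argument the paper intends. The paper states the corollary without a separate proof, as an immediate consequence of Theorem~\ref{thm:bigradedPa} and the regularity of the bidegree-$(1,l)$ sequence: each successive quotient multiplies the Hilbert series by $(1-tq^l)$, which cancels the denominator $\prod_{j=0}^n(1-tq^j)$.
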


\subsection{Invariants under a Young subgroup} 

Given $\alpha \vDash n$, there is a corresponding Young subgroup 
\[S_\alpha=S_{\alpha_1} \times \ldots \times S_{\alpha_k}\subset S_n.\]
The action of the symmetric group $S_n$ on the bigraded algebra $\pa_n$ restricts to an action of $S_\alpha$. 

\begin{lemma}\label{lem:regseq} The $S_n$-invariant elements $\tilde e_0, \ldots, \tilde e_n\in\pa_n$ form a regular sequence in the invariant subalgebra $\pa_n^{S_\alpha}\subset\pa_n$. The quotient 
\[ P_n^{S_\alpha} \cong \pa_n^{S_\alpha}/\langle \tilde e_0, \ldots, \tilde e_n\rangle\subset P_n
\]
is the invariant subalgebra $P_n^{S_\alpha}$ of the projective coinvariant algebra $P_n$. 
\end{lemma}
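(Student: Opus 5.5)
The proof rests on two standard facts. Over $\C$, taking invariants under a finite group is exact, because of the Reynolds operator $\rho=\frac{1}{|S_\alpha|}\sum_{g\in S_\alpha} g$. And $\pa_n$ is a finitely generated module over the polynomial subalgebra $\C[\te_0,\ldots,\te_n]$, which consists of $S_n$-invariant elements. I plan to combine these with the regularity from Proposition~\ref{prop_reg}.

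\textbf{Regularity.} Set $B=\C[\te_0,\ldots,\te_n]\subset \pa_n^{S_\alpha}$. By Proposition~\ref{prop_reg}, the $\te_i$ form a regular sequence in the graded ring $\pa_n$, and the quotient $P_n$ is finite dimensional. By graded Nakayama, $\pa_n$ is therefore a finitely generated graded $B$-module; in fact it is a free $B$-module, since a graded module that is Cohen--Macaulay over a polynomial ring of full dimension is free. The Reynolds operator $\rho$ is $B$-linear, because $B$ consists of invariants. Hence $\pa_n^{S_\alpha}=\rho(\pa_n)$ is a $B$-direct summand of $\pa_n$, and so it is a graded projective, hence free, $B$-module. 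A graded ring that is free over the polynomial ring $B$ has the generators of $B$ as a regular sequence. This proves the first claim.

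Alternatively, I can argue the regularity step inductively. Suppose $f\in \pa_n^{S_\alpha}$ satisfies $\te_j f\in\langle\te_0,\ldots,\te_{j-1}\rangle\pa_n^{S_\alpha}$. Regularity in $\pa_n$ gives $f\in\langle \te_0,\ldots,\te_{j-1}\rangle \pa_n$. Applying $\rho$ to such an expression gives $f=\rho(f)\in\langle \te_0,\ldots,\te_{j-1}\rangle\pa_n^{S_\alpha}$, since $\rho$ fixes $f$ and commutes with multiplication by the $\te_i$. The same averaging shows that the quotient $\pa_n^{S_\alpha}/\langle\te_0,\ldots,\te_{j-1}\rangle\pa_n^{S_\alpha}$ is nonzero, so it is a proper quotient.

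\textbf{Identification of the quotient.} Let $J=\langle \te_0,\ldots,\te_n\rangle\pa_n$. The short exact sequence of $S_\alpha$-modules $0\to J\to \pa_n\to P_n\to 0$ stays exact after taking $S_\alpha$-invariants, by exactness of invariants over $\C$, giving
\[0\to J^{S_\alpha}\to\pa_n^{S_\alpha}\to P_n^{S_\alpha}\to 0.\]
It remains to show $J^{S_\alpha}=\langle\te_0,\ldots,\te_n\rangle\pa_n^{S_\alpha}$. If $g=\sum_i \te_i h_i\in J$ is invariant, then $g=\rho(g)=\sum_i\te_i\rho(h_i)$, which lies in the ideal generated by the $\te_i$ in the invariant ring. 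The reverse inclusion is clear. The resulting isomorphism is bigraded and compatible with the inclusion into $P_n$.

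I expect no genuine obstacle. The only points needing care are the $B$-linearity of $\rho$, which uses the $S_n$-invariance of the $\te_i$ and the fact that $S_\alpha\subset S_n$, and checking that the quotient is nonzero so that the sequence is honestly regular.
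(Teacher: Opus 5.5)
Your proof is correct and rests on the same idea as the paper's: exactness of $S_\alpha$-invariants in characteristic zero. The paper applies this to the whole Koszul complex of $\te_0,\ldots,\te_n$ on $\pa_n$, whose invariant part is the Koszul complex on $\pa_n^{S_\alpha}$; you instead use the Reynolds operator to check the colon conditions and the identity $J^{S_\alpha}=\langle\te_0,\ldots,\te_n\rangle\pa_n^{S_\alpha}$ directly, which amounts to the same argument.
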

\begin{proof} Consider the Koszul complex of the elements $\tilde e_0, \ldots, \tilde e_n\in \pa_n$. 
Since taking invariants under a finite group is an exact functor in characteristic zero, taking invariants in the Koszul complex
gives the exact Koszul complex of $\tilde e_0, \ldots, \tilde e_n\in \pa_n^{S_\alpha}$. So these elements form a regular sequence in $\pa_n^{S_\alpha}$ also, and the quotient of $\pa_n^{S_\alpha}$ by the ideal they define is indeed the invariant algebra $P_n^{S_\alpha}\subset P_n$. 
\end{proof}
The action of $S_\alpha$ extends to the families $\CT_n$, $\CP_n$ of deformations of the bigraded algebra $\pa_n$ and its quotient~$P_n$. Denote $\CP_\alpha = \CP_n^{S_\alpha}$.
The maps $\C[s_0, s_1]\rightarrow \CT_n^{S_\alpha}\cong \pa_n^{S_\alpha}[s_0,s_1]$ and $\C[s_0, s_1]\rightarrow \CP_\alpha$ are still flat, the former trivially, the latter by semicontinuity. Once again, we can think geometrically of the maps \[\tau_\alpha\colon \Spec\CT_n^{S_\alpha}\to \Spec\C[s_0, s_1]=\A^2\] 
and\[\pi_\alpha\colon \Spec\CP_\alpha\to \Spec\C[s_0, s_1]=\A^2\] 
as a giving two-parameter families of (spectra of) algebras over the affine plane $\A^2$, equipped an action of the torus $(\C^*)^2$ acting on the base as before. 

\begin{theorem} \label{thm:Palpha_def}
Fix a composition $\alpha \vDash n$.
\begin{enumerate} \label{thm:invariantisPalpha}
\item[(i)] Over the point $(0,0)\in\A^2$, the central fibre $(\CT_n^{S_\alpha})_{(0,0)}$ of the family $\CT_n^{S_\alpha}$  is isomorphic to the bigraded coordinate algebra $\pa_\alpha$ of the Segre embedding $s_\alpha\colon \P_\alpha \rightarrow \P^{N_\alpha}$. 
\item[(ii)] The central fibre $(\CP_\alpha)_{(0,0)}$ of the family $\CP_\alpha$ is isomorphic to the finite-dimensional bigraded algebra $P_\alpha$.
\item[(iii)] Over a point $(s,0)\in\A^2$, the fibre $(\CP_\alpha)_{(s,0)}$ of the family $\CP_\alpha$ is isomorphic to the partial coinvariant algebra $R_\alpha$. 
\item[(iv)] Over a point $(s_0,s_1)\in\A^2$ with $s_0s_1\neq 0$, the fibre $(\CP_\alpha)_{(s_0,s_1)}$ of the family $\CP_\alpha$ is isomorphic to the invariant subalgebra $(\C S_n)^{S_\alpha}<\C S_n$. 
\end{enumerate} 
\end{theorem}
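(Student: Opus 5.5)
The plan is to prove (i) directly from the explicit models, and then deduce (ii)--(iv) by combining Lemma~\ref{lem:regseq}, exactness of $S_\alpha$-invariants in characteristic zero, and Theorem~\ref{thm_main_def}.

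For (i), note that $(\CT_n^{S_\alpha})_{(0,0)} = \pa_n^{S_\alpha}$, since $\CT_n$ is a trivial family. It therefore suffices to identify $\pa_n^{S_\alpha}$ with $\pa_\alpha$ as bigraded algebras. I would use the embedding~\eqref{explicitembedding}, under which the degree $r$ piece is $(\pa_n)_r=\zeta^r\,\C[\xi_1,\ldots,\xi_n]_{\leq r}$. Split the variables into blocks according to $\alpha$. Then $S_\alpha$-invariants are spanned by $\zeta^r$ times products over $i$ of monomial symmetric functions in the $i$-th block, each with all exponents at most $r$. On the other side, the embedding~\eqref{generalexplicitembedding} suggests the map
\[
\xi_{i,l}\mapsto e_l(\text{$i$-th block of }\xi),
\]
so that $y_I\mapsto \zeta\prod_i e_{j_i}(\text{block }i)$. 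Concretely, $y_I$ is sent to the $S_\alpha$-symmetrization $\sum x_J$, summed over the $S_\alpha$-orbit of subsets $J$ with $|J\cap \text{block}_i|=j_i$. This gives a bigraded homomorphism $\pa_\alpha\to\pa_n^{S_\alpha}$, because it is induced from a ring map of the ambient polynomial rings; the degrees match, with $(1,|I|)$ on both sides.

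The key point is that this map is an isomorphism in each degree $r$. By the fundamental theorem on symmetric polynomials with degree bounds, the products of at most $r$ elementary symmetric polynomials of a block span exactly the symmetric polynomials of degree at most $r$ in each variable. Hence the map is surjective. Injectivity is cleanest via Hilbert series. The proof of Theorem~\ref{thm:char} shows
\[
\dim_q(\pa_n)_r^{S_\alpha}=\prod_i h_{\alpha_i}(1,q,\ldots,q^r)=\prod_i\left[\begin{array}{c} r+\alpha_i\\ \alpha_i\end{array}\right]_q,
\]
and this equals $\dim_q(\pa_\alpha)_r$ by the proof of Theorem~\ref{thm:bigradedPa}. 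Equal Hilbert series together with surjectivity give an isomorphism. Under it, $\tf_l$ maps to $\te_l$, since summing the symmetrized elements over all $I$ with $|I|=l$ gives all subsets of size $l$.

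For (ii), Lemma~\ref{lem:regseq} gives $(\CP_\alpha)_{(0,0)}=P_n^{S_\alpha}=\pa_n^{S_\alpha}/\langle\te_0,\ldots,\te_n\rangle$. This is $\pa_\alpha/\langle\tf_0,\ldots,\tf_n\rangle=P_\alpha$ by (i). Here I use that taking invariants commutes with the fibre at $(0,0)$, which holds by exactness of $S_\alpha$-invariants.

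For (iii) and (iv), the same exactness gives $(\CP_\alpha)_{(s_0,s_1)}=\big((\CP_n)_{(s_0,s_1)}\big)^{S_\alpha}$. The fibre is obtained by tensoring with $\C[s_0,s_1]/\mathfrak m$, which is a quotient by an $S_\alpha$-stable ideal, and invariants are exact. Theorem~\ref{thm_main_def}(iii) then identifies the fibre over $(s,0)$ with $R_n^{S_\alpha}=R_\alpha$, using that the identification there is $S_n$-equivariant. Theorem~\ref{thm_main_def}(iv) gives $(\C S_n)^{S_\alpha}$ at general points; this is the ring of functions on the orbit, which has dimension ${n\choose\alpha}$.

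The main obstacle is (i), specifically making sure that the degree-bounded invariant ring is generated in first degree by the block-symmetrized $x_J$. That is, every $S_\alpha$-invariant in $\zeta^r\C[\xi]_{\leq r}$ must be a product of $r$ elementary symmetric products. I would prove this by induction on $r$: peel off a factor of $\zeta\prod_i e_{j_i}$, where $j_i$ is the number of exponents equal to $r$ in the dominant monomial of block $i$, and use the leading-term argument in each block.
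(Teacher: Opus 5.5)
Your proof is correct. It uses the same map as the paper: your $\xi_{i,l}\mapsto e_l(\text{block } i)$ is exactly the paper's $\omega_\alpha$, and its restriction is $\chi_\alpha$. You then prove it is an isomorphism by a different route.

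\emph{The paper's route.} It proves only that $\chi_\alpha\colon T_\alpha\to T_n^{S_\alpha}$ is injective, using algebraic independence of elementary symmetric polynomials in disjoint blocks. It then passes to the Artinian quotient $\bar\chi_\alpha\colon P_\alpha\to P_n^{S_\alpha}$ and compares two $q$-dimensions: $\dim_q P_\alpha$ from Corollary~\ref{cor:Palpha}, which rests on MacMahon's identity, and $\dim_q P_n^{S_\alpha}=\dim_q R_\alpha$, obtained from flatness of the family over $\A^1\times\{0\}$.

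\emph{Your route.} You establish $T_\alpha\cong T_n^{S_\alpha}$ directly, degree by degree. You match the $q$-dimensions $\prod_i h_{\alpha_i}(1,q,\ldots,q^r)$ already available from the proofs of Theorems~\ref{thm:char} and~\ref{thm:bigradedPa}, and only afterwards quotient by the matched regular sequences $\tf_l\mapsto\te_l$. This buys you two things.
\begin{itemize}
\item You no longer need the flat-family dimension count or MacMahon's identity.
\item You avoid transporting injectivity from $\chi_\alpha$ to $\bar\chi_\alpha$. That step is not automatic for an injective map of modules reduced modulo a regular sequence; for example $y\mapsto sx$ from $\C[s,y]$ to $\C[s,x]$ is injective but becomes zero mod $s$.
\end{itemize}

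\emph{A simplification.} The step you flag as the main obstacle is unnecessary. Once corresponding graded pieces have equal finite dimension, injectivity alone gives an isomorphism, and injectivity of $\omega_\alpha$ (hence of $\chi_\alpha$) is immediate, as the paper observes. So you can drop the degree-bounded fundamental-theorem induction. Your surjectivity argument, via $e_{\lambda'}$ with $\ell(\lambda')\le r$ spanning the block-symmetric polynomials with exponents at most $r$ and then regrouping factors, is correct but redundant.
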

\begin{proof} Taking invariants under a finite group commutes with taking fibres in flat families in characteristic zero. Recalling Theorem~\ref{thm_main_def} as well as $R_n^{S_\alpha}=R_\alpha$, 
(iii) is immediate, and so is (iv). We also get 
$(\CT_n^{S_\alpha})_{(0,0)}\cong T_n^{S_\alpha}$, as well as $(\CP_\alpha)_{(0,0)}\cong P_n^{S_\alpha}$. 

Restrict the family $\CP_\alpha$ to the one-dimensional base $\A^1\times\{0\}\subset\A^2$. 
We obtain a flat family of graded algebras, where the central fibre $(\CP_\alpha)_{(0,0)}\cong P_n^{S_\alpha}$ is graded by its second grading. Up to isomorphism, all other fibres are isomorphic using the torus action to $(\CP_\alpha)_{(1,0)}\cong R_n^{S_\alpha} = R_\alpha$. So by flatness, we get 
\begin{equation}\label{eq:dimq1} \dim_q P_n^{S_\alpha} = \dim_q R_\alpha = \left[\begin{array}{c}n\\\alpha\end{array}\right]_q,
\end{equation}
the $q$-multinomial coefficient. 

The key step of our proof will be to define a bigraded, injective algebra homomorphism 
\[ \chi_\alpha \colon T_\alpha\to T_n^{S_\alpha}\]
that maps the regular sequence $\{\tf_0, \ldots, \tf_n\}\subset T_\alpha$ to the regular sequence 
$\{\tilde e_0, \ldots, \tilde e_n\}\subset T_n^{S_\alpha}$. Assume first that such a map $\chi_\alpha$ has been constructed. Then it descends to an injective map of 
bigraded algebras
\[ \bar\chi_\alpha \colon P_\alpha\to P_n^{S_\alpha}.\]
However, specialising Corollary~\ref{cor:Palpha} to the $q$-grading, we get
\begin{equation}\label{eq:dimq2}
    \dim_q P_\alpha = \left[\begin{array}{c}n\\\alpha\end{array}\right]_q. 
\end{equation} 
Comparing~\eqref{eq:dimq1} and~\eqref{eq:dimq2}, we see that $\bar\chi_\alpha$ has to be
an isomorphism, which implies that $\chi_\alpha$ is also an isomorphism, proving statements (i)--(ii) of the Theorem. 

It therefore remains to construct the homomorphism $\chi_\alpha$, and prove its injectivity.
Recall that 
\[\pa_\alpha  = \C\left[y_{\scriptscriptstyle I} \colon I\subset M_\alpha\right]/\langle y_{\scriptscriptstyle I_1} y_{\scriptscriptstyle I_2} - y_{\scriptscriptstyle I_3} y_{\scriptscriptstyle I_4}\colon (I_1,I_2)\sim (I_3,I_4)\rangle,\]
with the equivalence relation $\sim$ from~\ref{sec:Talpha} above. 
Define
\[\tilde\chi_\alpha \colon \C\left[y_{\scriptscriptstyle I} \colon I\subset M_\alpha\right] \to \C\left[x_K \colon K\subset [n]\right] 
\]
on the generator corresponding to $I=(1^{j_1}\ldots k^{j_k})\subset M_\alpha \mbox{ with } 0\leq j_i\leq \alpha_i$ by 
\[
\tilde\chi_\alpha(y_{\scriptscriptstyle I}) = \sum_{J_1, \ldots, J_k} x_{J_1\sqcup \ldots \sqcup J_k}, 
\]
where the sum runs over all collections $\{{J_1, \ldots, J_k}\}$ with $J_i\subset\{\alpha_1+\ldots +\alpha_{i-1}+1, \ldots, \alpha_1+\ldots +\alpha_i\}$ of size $0\leq j_i\leq \alpha_i$ for each $1\leq i \leq k$. This map respects the bigradings, and maps to elements in $\C\left[x_K \colon K\subset [n]\right]$ that are invariant under the natural action of $S_\alpha$. A routine combinatorial check gives that if $(I_1,I_2)\sim (I_3,I_4)$, then 
\[
\tilde\chi_\alpha(y_{I_1} y_{I_2} - y_{I_3} y_{I_4}) \in \left\langle x_{J_1}x_{J_2}-x_{J_1\cap J_2}x_{J_1\cup J_2}\colon J_1,J_2\subseteq[n]\right\rangle. 
\]
We therefore obtain an induced bigraded algebra homomorphism 
\[ \chi_\alpha \colon T_\alpha\to T_n^{S_\alpha}\subset T_n.\]
By definition, we get $\chi_\alpha(\tf_i)=\te_i$ for all $0\leq i\leq n$.
It remains to prove that the homomorphism $\chi_\alpha$ is injective. 

For this, we will use the models\footnote{I would like to thank Fabi\'an Levi\-c\'an for a key discussion about this point.} of our algebras introduced in~\eqref{explicitembedding}-\eqref{generalexplicitembedding}  In that picture, our algebras were realised as explicit subalgebras of free
commutative algebras  $T_\alpha \subset \C[\zeta, \xi_{i,l}]$ and $T_n\subset \C[\zeta,\xi_1, \ldots, \xi_n]$. 
We claim that in this model, the map $\chi_\alpha \colon T_\alpha\to T_n$ is the restriction of a $\C$-algebra
homomorphism
\[  \omega_\alpha\colon\C[\zeta, \xi_{i,l}]\to \C[\zeta,\xi_1, \ldots, \xi_n]\]
defined as follows. We let $\omega_\alpha(\zeta) = \zeta$, and let
\[\omega_\alpha(\xi_{i,l}) = e_l(\xi_{\alpha_1+\ldots +\alpha_{i-1}+1}, \ldots, \xi_{\alpha_1+\ldots +\alpha_i})
\]
for $1\leq l \leq \alpha_i$, where $e_l$ denotes the $l$-th elementary symmetric polynomial of the given variables. 
Comparing definitions, it is easy to set that indeed \[\chi_\alpha = \omega_\alpha|_{T_\alpha}\colon T_\alpha\to T_n\subset \C[\zeta,\xi_1, \ldots, \xi_n].\]
On the other hand, it is immediate that the map $\omega_\alpha$ between free polynomial algebras is injective. 
Hence so is $\chi_\alpha$, and our proof is complete. 
\end{proof}

\subsection{A further graded character formula}

Given $n=k\cdot m$, consider the composition $\alpha=(m,\ldots, m)\vDash n$. The corresponding Young subgroup is 
$S_\alpha = (S_m)^{\times k}\subset S_n$. This is normalised in~$S_n$ by the symmetric group $S_k$ that permutes $m$-tuples of indices 
simultaneously. The corresponding invariant algebra $P_\alpha = P_n^{S_\alpha}$ thus has a residual bigraded action 
of the symmetric group $S_k$. The formula for its Frobenius character is best expressed once again in plethystic notation. 
\begin{theorem} \cite[Cor.~5,8, Ex.~5.10]{LR} For  $n=k\cdot m$ and $\alpha=(m,\ldots, m)\vDash n$, 
\[\cha_{t,q} P_\alpha =  \prod_{j=0}^n(1-t q^j)\left(\sum_{i\geq 0} t^i \sum_{\mu\vdash k}s_\mu\left(\left[\!\!\begin{array}{c} i+m \\ m \end{array}\!\!\right]_q\right)s_\mu \right)\] 
as bigraded characters of the symmetric group $S_k$. 
\label{thm:fabianetal}
\end{theorem}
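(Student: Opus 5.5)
The plan is to mimic the proof of Theorem~\ref{thm:char}, working with $T_\alpha$ in place of $T_n$. First, using Theorem~\ref{thm:invariantisPalpha}(i)--(ii), identify $P_\alpha$ with $P_n^{S_\alpha}$ and $T_\alpha$ with $T_n^{S_\alpha}$. These identifications are $S_k$-equivariant, because the normaliser $S_k$ permutes the $k$ blocks of $[n]$ and acts on $T_\alpha$ by permuting the factors $\P^m$. The regular sequence $\tf_0,\ldots,\tf_n$ consists of $S_k$-invariant elements, namely the images of the $S_n$-invariant elements $\te_0,\ldots,\te_n$. Taking the Koszul resolution of $P_\alpha$ over $T_\alpha$, which is $S_k$-equivariant with the generators spanning trivial representations of bidegrees $(1,j)$, gives
\[
\cha_{t,q} P_\alpha = \prod_{j=0}^n (1-tq^j)\,\cha_{t,q} T_\alpha .
\]
So everything reduces to computing the bigraded $S_k$-character of $T_\alpha$.

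For this I would use the model~\eqref{generalexplicitembedding}. The degree-$i$ piece (in the first grading) satisfies
\[
(T_\alpha)_i \cong \bigotimes_{b=1}^k V_i,
\]
where $V_i=\C[\P^m]_i$ is the space of degree-$i$ forms in $z_0,\ldots,z_m$, with $\deg z_j=(1,j)$. Equivalently, $V_i \cong \C[\xi_1,\ldots,\xi_m]^{S_m}_{\leq i}$. In either model, $S_k$ permutes the tensor factors. By Lemma~\ref{lem:Pn}, $\dim_q V_i = \left[\begin{smallmatrix} i+m\\ m\end{smallmatrix}\right]_q$, and $V_i$ has a homogeneous basis of monomials whose $q$-weights are exactly the monomials of that $q$-binomial, viewed as an alphabet.

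The key step is then the standard fact that if $V$ is a graded vector space with $q$-weight multiset $X$ (so that $\dim_q V = X$ as a sum of monomials), then the Frobenius character of $V^{\otimes k}$ under $S_k$ permuting the factors is
\[
h_k[X\mathbf z] = \sum_{\mu\vdash k} s_\mu[X]\, s_\mu(\mathbf z),
\]
by the Cauchy identity. I would prove this exactly as in Theorem~\ref{thm:char}. The $S_\lambda$-invariants of $V^{\otimes k}$ have $q$-dimension $\prod_i h_{\lambda_i}[X]$, because the invariants of $V^{\otimes r}$ under $S_r$ equal $\mathrm{Sym}^r V$. Then I pair with $m_\lambda$ and apply Cauchy. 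With $X=\left[\begin{smallmatrix} i+m\\ m\end{smallmatrix}\right]_q$ and summing over $i$ with weight $t^i$, this gives
\[
\cha_{t,q} T_\alpha = \sum_{i\ge0} t^i \sum_{\mu\vdash k} s_\mu\left(\left[\begin{smallmatrix} i+m\\ m\end{smallmatrix}\right]_q\right) s_\mu ,
\]
and multiplying by the Koszul factor yields the theorem.

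The main obstacle is checking equivariance carefully. The isomorphism $\chi_\alpha$ of Theorem~\ref{thm:invariantisPalpha} must intertwine the block-permuting $S_k$-actions; this holds because $\omega_\alpha$ sends $\xi_{b,l}$ to $e_l$ of the $b$-th block. One also has to confirm that the identification of $(T_\alpha)_i$ with $V_i^{\otimes k}$ involves no sign twist. It does not, since the $\zeta^i$ factor is $S_k$-invariant and the monomial bases are permuted without signs. Finally, exactness of the Koszul complex, from the regular sequence in the Proposition above, justifies the multiplicative factor at the level of characters, since Euler characteristics of equivariant complexes are additive in the representation ring.
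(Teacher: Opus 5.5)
Your argument is correct. The paper gives no proof of this statement (it only quotes \cite{LR}), and your argument is the one the paper gives for Theorem~\ref{thm:char}, transposed to $S_k$ permuting the $k$ tensor factors of $(T_\alpha)_i\cong\C[\P^m]_i^{\otimes k}$ (equivalently of $(T_n)_i^{S_\alpha}\cong(\C[\xi_1,\ldots,\xi_m]^{S_m}_{\leq i})^{\otimes k}$), followed by the equivariant Koszul factor $\prod_{j=0}^n(1-tq^j)$. The only new ingredients are the equivariance checks you flag: $\chi_\alpha$ intertwining the block permutations, $S_k$-invariance of the $\tf_l$, and the absence of a sign twist; you handle all three correctly.
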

It is not known to the authors of~\cite{LR}, and indeed to me, whether there is a formula for this character that is a sum over 
a natural finite combinatorial set in analogy to~\eqref{eq:Pntqchar}, though this seems likely. Compare \cite[Prop.~5.21]{LR} and the surrounding discussion. 

\section{Further directions}

\subsection{Cohomological interpretations}\label{sec:cohom} Let $F_n = {\rm GL}_n(\C)/B$ be the flag variety of all flags $U_\bullet=(U_1\subset\ldots \subset U_{n-1} \subset \C^n)$ with $\dim U_i = i$, where
$B<{\rm GL}_n(\C)$ is the standard Borel subgroup. Then Borel's isomorphism is an isomorphism of graded rings
\[
R_n \cong H^*(F_n, \C)
\]
between the coinvariant algebra and the complex cohomology of the flag variety (the degrees are doubled on the right hand side). One deduces the equality
\[
\dim_{q}H^*(F_n, \C) = [n]_q!
\]
for the Poincar\'e polynomial of the flag variety. 

Further, to a composition $\alpha\Vdash n$ one can associate a parabolic subgroup $P_\alpha <{\rm GL}_n(\C)$, so that the quotient
$F_\alpha={\rm GL}_n(\C)/P_\alpha$ is the variety of partial flags, the space of all filtrations of subspaces $(V_1\subset\ldots \subset V_{k-1} \subset \C^n)$ with $\dim V_i = \alpha_1+\ldots + \alpha_i$. The inclusion $B\leq P_\alpha$ gives rise to a projection map
\[ \pi_\alpha\colon F_n \to F_\alpha\]
that forgets the subspaces in a full flag $U_\bullet$ whose dimensions are not of the form $\alpha_1+\ldots + \alpha_i$ for some~$i$. The pullback map
\[ \pi_\alpha^*\colon H^*(F_\alpha,\C) \to H^*(F_n,\C)\cong R_n \]
is injective, and gives an identification 
\[
H^*(F_\alpha,\C) \cong R_\alpha = R_n^{S_\alpha}
\]
between the cohomology of the partial flag variety and the partial coinvariant algebra $R_\alpha$~\cite[Cor.~5.4, Thm.~5.5]{BGG}. One deduces the equality
\[
\dim_{q}H^*(F_\alpha, \C) = \left[\!\!\begin{array}{c}n\\\alpha\end{array}\!\!\right]_q
\]
for the Poincar\'e polynomial of the partial flag variety. In particular, for the composition $\alpha =(m,n-m)\Vdash n$, we have 
$F_\alpha = {\rm Gr}(m,n)$ the Grassmannian, and the previous equality becomes the well-known
\[
\dim_{q}H^*({\rm Gr}(m,n), \C) = \left[\!\!\begin{array}{c}n\\ m \end{array}\!\!\right]_q. 
\]

A natural question that arises is whether the deformation families $\CP_n, \CP_\alpha$ of the algebras $R_n$, $R_\alpha$ from Theorems~\ref{thm_main_def}-\ref{thm:Palpha_def} have any cohomological interpretations. Let us first write out the deformation (smoothing) of the algebras $R_\alpha$ that one obtains from the family.  

\begin{proposition}\label{prop:cohom}
Given a composition $\alpha=(\alpha_1,\ldots, \alpha_k)\Vdash n$, the one-parameter family of deformations $(\CP_\alpha)_{(1,s)}$ of the
algebra $(\CP_\alpha)_{(1,0)}\cong R_\alpha$ can be described by generators and relations as follows.
Fix variables $u_1^{(1)},\ldots, u_{1}^{(\alpha_1)}, u_2^{(1)}, \ldots, u_{2}^{(\alpha_2)}, \ldots, u_k^{(1)}, \ldots, u_k^{({\alpha_k})}$ with degrees $\deg\left(u_i^{(j)}\right)=j$, and
an extra variable $s$ of degree $\deg s=n$. Then
\[ (\CP_\alpha)_{(1,s)} \cong \C\left[u_1^{(1)}, \ldots,  u_{\alpha_k}^{(k)}\right]\Big/ J_\alpha, 
\] 
where $J_\alpha$ is the ideal in the given polynomial ring generated by degree-homogeneous terms in the polynomial 
\[ \prod_{i=1}^k \left(1+\sum_{j=1}^{\alpha_i} u_i^{(j)}\right)-(1+s).\]
\end{proposition}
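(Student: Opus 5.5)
Here $\CP_\alpha=\CP_n^{S_\alpha}$, and by Theorem~\ref{thm_main_def} the fibre $(\CP_n)_{(1,s)}$ is
\[
\C[v_1,\ldots,v_n]\big/\langle e_1(v),\ldots,e_{n-1}(v),\,e_n(v)-s\rangle ,
\]
with $v_i=(x_{\{i\}})_{(1,s)}$. The relation $x_\emptyset x_{I\cup\{i\}}=x_{\{i\}}x_I$ together with $x_\emptyset=1$ shows that $x_I\mapsto\prod_{i\in I}v_i$. Since taking invariants commutes with taking fibres, we get $(\CP_\alpha)_{(1,s)}\cong\bigl(\C[v]/\langle e_1,\ldots,e_{n-1},e_n-s\rangle\bigr)^{S_\alpha}$. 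As in the proof of Lemma~\ref{lem:regseq}, exactness of invariants identifies this with $\C[v]^{S_\alpha}/\langle e_1,\ldots,e_{n-1},e_n-s\rangle$, taking the ideal inside $\C[v]^{S_\alpha}$.

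By the fundamental theorem on symmetric functions applied to each block, $\C[v]^{S_\alpha}$ is the free polynomial ring on the blockwise elementary symmetric polynomials
\[
u_i^{(j)}=e_j\bigl(v_{\alpha_1+\cdots+\alpha_{i-1}+1},\ldots,v_{\alpha_1+\cdots+\alpha_i}\bigr),\qquad 1\le j\le\alpha_i .
\]
These have degree $j$ in the grading $\deg v_i=1$, which is the grading induced by the subtorus fixing $(1,0)$ after rescaling. This is the same substitution as the map $\omega_\alpha$ in the proof of Theorem~\ref{thm:invariantisPalpha}. The generating identity
\[
\prod_{m=1}^n(1+v_m)=\prod_{i=1}^k\Bigl(1+\sum_{j=1}^{\alpha_i}u_i^{(j)}\Bigr)
\]
expresses each $e_l(v)$ as the degree-$l$ homogeneous part of the right-hand side.

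The relations $e_l=0$ for $l<n$ and $e_n=s$ are therefore exactly the vanishing of the homogeneous components of degrees $1,\ldots,n$ of
\[
\prod_i\Bigl(1+\sum_j u_i^{(j)}\Bigr)-(1+s),
\]
with $s$ assigned degree $n$. The degree-$0$ component is trivially zero. Moreover, the top-degree component of the product is $\prod_i u_i^{(\alpha_i)}=e_n$, so the top-degree relation reads $e_n-s=0$. This gives $J_\alpha$, and for the family in $s$ it keeps the torus-compatible grading.

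The only point needing care is that the ideal generated by $e_1,\ldots,e_{n-1},e_n-s$ in $\C[v]$, intersected with the invariants, equals the ideal these elements generate in $\C[v]^{S_\alpha}$. I will check this with the Reynolds operator: if $f=\sum g_l r_l$ is $S_\alpha$-invariant and the $r_l$ are invariant, averaging gives $f=\sum \bar g_l r_l$ with each $\bar g_l$ invariant. This is also the content of the exactness argument already used in Lemma~\ref{lem:regseq}, so I expect no real obstacle. Finally, at $s=0$ the presentation recovers the standard presentation of $R_\alpha$, consistent with Theorem~\ref{thm:invariantisPalpha}(iii).
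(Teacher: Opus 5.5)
Your proposal is correct and follows the paper's route. The paper's proof says only that the result follows by unpacking the presentation $\C[v_1,\ldots,v_n]/\langle e_1,\ldots,e_{n-1},e_n-s\rangle$ of the fibre from the proof of Theorem~\ref{thm_main_def}, and you carry out exactly that unpacking: you pass to $S_\alpha$-invariants via the Reynolds operator and rewrite $\prod_m(1+v_m)$ blockwise in the elementary symmetric polynomials $u_i^{(j)}$.
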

\begin{proof} This follows from the proof of Theorem~\ref{thm_main_def}, by unpacking definitions. 
\end{proof} 

Note that for $s=0$, we indeed recover the standard explicit construction of the cohomology of the partial flag variety $F_\alpha$ in terms of Chern classes of universal bundles, with the variable $u_j^{(i)}$ representing the $j$th Chern class of the $i$th universal quotient bundle. 

For two-term compositions $\alpha=(m,n-m)\Vdash n$, corresponding to the partial flag varieties $F_\alpha={\rm Gr}(m,n)$, this deformation 
of the cohomology ring $H^*({\rm Gr}(m,n), \C)$ corresponds to the (small) {\em quantum cohomology} $QH^*({\rm Gr}(m,n), \C)$ with quantum 
parameter $s$, see for example~\cite[Thm.0.1]{ST}. For more general partial flag varieties, and in particular for the flag variety~$F_n$, this deformation does not appear to match\footnote{I would like to thank Konstanze Rietsch for detailed explanations on this point.} any known 
(equivariant) quantum deformation~\cite{GK, CF, Kim}
of the cohomology ring $H^*(F_\alpha, \C)$. 

Two questions thus arise from this discussion. 
\begin{enumerate}
\item Can one give a natural cohomological interpretation to the {\em deformation} (smoothing) of the cohomology ring  $H^*(F_\alpha, \C)$ from Proposition~\ref{prop:cohom} for a partial flag variety $F_\alpha$ parametrising flags with more than one step?
\item Can one give a natural cohomological interpretation to the {\em degeneration} of the cohomology ring $H^*(F_\alpha, \C)$ from Theorem~\ref{thm:Palpha_def}(i) for an arbitrary partial flag variety $F_\alpha$? 
\end{enumerate}

\begin{example} Let us consider $\alpha=(2,1)\vDash 3$ one final time, continuing Examples~\ref{ex21_ring}-\ref{ex21}. We have 
\[T_{(2,1)}  = \C[y_\emptyset,y_{\{1\}}, y_{\{1,1\}}, y_{\{2\}}, y_{\{1,2\}}, y_{\{1,1,2\}}]\Big/ \left\langle\mathrm{rank}\left|\begin{array}{ccc} y_\emptyset & y_{\{1\}} & y_{\{1,1\}} \\ y_{\{2\}} & y_{\{1,2\}}& y_{\{1,1,2\}} \end{array}\right|\leq 1\right\rangle,\]
and the general fibre of the family of deformations of the corresponding $P_\alpha$ is
\[ (\CP_{(2,1)})_{(s_0, s_1)} \cong T_{(2,1)} \Big/ \left\langle y_\emptyset-s_0 , y_{\{1\}} + y_{\{2\}}, y_{\{1,1\}} + y_{\{1,2\}}, y_{\{1,1,2\}}-s_1 \right\rangle.
\]
At $(s_0, s_1)=(1,0)$, using the procedure described in the proof of Theorem~\ref{thm_main_def}, and taking
Young invariants, we get the algebra 
\[R_{(2,1)} = (\CP_{(2,1)})_{(1, 0)} \cong \C[u_1, u_2, v_1] / \left\langle u_1 + v_1, u_2+u_1v_1, u_2v_1-s \right\rangle \cong \C[u_1]/\left\langle u_1^3 \right\rangle, \]
%\[ \begin{array}{rcl}R_{(2,1)}\cong (\CP_{(2,1)})_{(1, 0)} & \cong &\C[u_1^{(1)}, u_1^{(2)}, u_2^{(1)}] \Big/ \left\langle u_1^{(1)}+ u_2^{(1)}, u_1^{(2)}+u_1^{(1)}u_2^{(1)}, u_1^{(2)}u_2^{(1)} \right\rangle\\ & \cong & \C[u_1^{(1)}]\Big/\left\langle \left(u_1^{(1)}\right)^3 \right\rangle, \end{array}\]
where to simplify notation, we have written $\{u_1, u_2, v_1\}$ for $\{u_1^{(1)}, u_1^{(2)}, u_2^{(1)}\}$. This graded algebra is isomorphic to 
the cohomology of the corresponding flag variety $F_{2,1}\cong \P^2$. For a general deformation, we obtain 
%\[\begin{array}{rcl} (\CP_{(2,1)})_{(1, s)} & \cong & \C[u_1^{(1)}, u_1^{(2)}, u_2^{(1)}] \Big/ \left\langle u_1^{(1)}+ u_2^{(1)}, u_1^{(2)}+u_1^{(1)}u_2^{(1)}, u_1^{(2)}u_2^{(1)}-s \right\rangle \\ & \cong &\C[u_1^{(1)}]\Big/\left\langle \left(u_1^{(1)}\right)^3+s \right\rangle, \end{array}\]
\[(\CP_{(2,1)})_{(1, s)} \cong \C[u_1, u_2, v_1] / \left\langle u_1 + v_1, u_2+u_1v_1, u_2v_1-s \right\rangle \cong \C[u_1]/\left\langle u_1^3+s \right\rangle, \]
which is indeed isomorphic to the (small) {\em quantum cohomology} $QH^*(\P^2)$ of $\P^2$, with quantum parameter $s$. 

%It can be checked that an analogous argument works for all $\alpha=(n-1,1)\vDash n$, with $F_\alpha\cong \P^{n-1}$: the one-dimensional deformation family of the algebra $P_\alpha$ considered above is, in this case, isomorphic to the quantum cohomology $qH^*(\P^{n-1})$. 

The bigraded dimension of the central fibre is 
\begin{equation} \label{eq:P2bigr}\dim_{t,q}(\CP_{(2,1)})_{(0, 0)} = \dim_{t,q} P_{(2,1)} = 1 + tq + tq^2, \end{equation}
which specialises at $t=1$ to the Poincar\'e polynomial
\[
\dim_{q}(\CP_{(2,1)})_{(q, 0)} = \dim_{q} R_{(2,1)} =  1 + q + q^2 =  \dim_{q}H^*(\P^2, \C). 
\]
The question is then whether there is a natural cohomological interpretation starting from $\P^2$ attached to the bigraded expression~\eqref{eq:P2bigr}.
\end{example}

\subsection{Garsia--Stanton--style bases in projective and partial coinvariant algebras}\label{sec:bases} 
%This last section is mostly a review, as the key result is already available in the literature in~\cite[Sect.~5.1]{ORS}. For $\alpha=(1^n)$, in other words for the case of permutations, equivalent results were obtained in~\cite[Sect.~3-4]{BO}. 

Fix $\alpha=(\alpha_1, \ldots, \alpha_k)\Vdash n$. Let ${\mathcal L}_\alpha$ denote the set
of increasing lattice paths from $(0,\ldots, 0)$ to $(\alpha_1,\ldots, \alpha_k)$ in $\N^k$, in other words the set of functions
$\ell\colon \{0,\ldots, n\}\to\N^k$ with the following properties: 
\begin{enumerate}
\item $\ell(0)=(0,\ldots, 0)$; 
\item $\ell(n)=(\alpha_1,\ldots, \alpha_k)$; 
\item for every $0\leq i<n$, we have $\ell(i+1)=\ell(i)+{\bf e}_j$ for some $1\leq j\leq k$, where 
$\{{\bf e}_1, \ldots, {\bf e}_k\}\subset\N^k$ are the standard unit vectors. 
\end{enumerate}

\begin{lemma} \label{lemma_paths} There is a natural bijection 
\[\delta_\alpha\colon W_\alpha\to{\mathcal L}_\alpha\] between the set of words $W_\alpha$ in the multiset $M_{\alpha}$ and the set of increasing lattice paths ${\mathcal L}_\alpha$. 
\end{lemma}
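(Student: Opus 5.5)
The bijection will read a word letter by letter, each letter giving one step of the path. For $w=(i_1i_2\ldots i_n)\in W_\alpha$, define $\delta_\alpha(w)=\ell_w$ by
\[
\ell_w(m)=\sum_{p=1}^{m}{\bf e}_{i_p}, \qquad 0\leq m\leq n .
\]
So the $j$-th coordinate of $\ell_w(m)$ counts the occurrences of the letter $j$ among the first $m$ letters of $w$.

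First we check that $\ell_w$ lies in ${\mathcal L}_\alpha$. Property (1) holds because the sum is empty at $m=0$. Property (3) holds by construction, since $\ell_w(m+1)-\ell_w(m)={\bf e}_{i_{m+1}}$. Property (2) holds because $w$ is a full-length word in $M_\alpha=\{1^{\alpha_1},\ldots,k^{\alpha_k}\}$, so each letter $j$ occurs exactly $\alpha_j$ times in $w$; hence $\ell_w(n)=(\alpha_1,\ldots,\alpha_k)$.

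For the inverse, take a path $\ell\in{\mathcal L}_\alpha$. By property (3), for each $0\leq m<n$ there is a unique index $j_{m+1}$ with $\ell(m+1)-\ell(m)={\bf e}_{j_{m+1}}$; uniqueness holds because the standard unit vectors are distinct. Set $\epsilon_\alpha(\ell)=(j_1\ldots j_n)$. Telescoping from property (1) to property (2) gives
\[
\sum_{p=1}^{n}{\bf e}_{j_p}=(\alpha_1,\ldots,\alpha_k),
\]
so the letter $j$ occurs exactly $\alpha_j$ times. Therefore $\epsilon_\alpha(\ell)\in W_\alpha$.

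It remains to see that $\delta_\alpha$ and $\epsilon_\alpha$ are mutually inverse. The letters of $w$ are recovered as the successive step directions of $\ell_w$, so $\epsilon_\alpha\circ\delta_\alpha={\rm id}$. Conversely, $\ell$ is determined by $\ell(0)$ together with its steps, so $\delta_\alpha\circ\epsilon_\alpha={\rm id}$. All of this is routine bookkeeping; the only real point is the counting of letter multiplicities in the endpoint condition. I expect the paper will later use this bijection to transport $\des$ and $\maj$ from words to paths: a descent at position $m$ corresponds to the step into $\ell(m)$ having a larger index than the step out of it.
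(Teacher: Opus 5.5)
Your proposal is correct and follows the same approach as the paper: the paper's one-line proof also reads the word as step directions, with the $m$-th letter $j$ choosing the step ${\bf e}_j$ in (3). You additionally verify the endpoint condition via letter multiplicities and check that the two maps are mutually inverse, which the paper leaves implicit.
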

\begin{proof} A word $w\in W_\alpha$ in the multiset $M_{\alpha}=\{1^{\alpha_1}, \ldots, k^{\alpha_k}\}$ is an ordered list of numbers between $1$ and $k$, each number $j$ appearing $\alpha_j$ times. Such a word
is precisely an instruction how to build an increasing lattice path $\ell\in{\mathcal L}_\alpha$: in each step in (3), choose the $j$ that is the next number in the list. 
\end{proof}

Using this correspondence, we can define the set of descent positions ${\rm Des}(\ell)$ of a lattice path $\ell\in{\mathcal L}_\alpha$ to be the set of lattice points $\ell(i)\in \N^k$ for those $0\leq i <n$ that are descent positions of the corresponding word $w\in W_{\alpha}$. In other words, it is the set of lattice points $p\in \N^k$ on our path such that the step to this lattice point in the path has a higher value of $j$ than the step after it. 

Next, note that a lattice point $p=(i_1, \ldots, i_k)\in \N^k$ with $0\leq i_j\leq \alpha_j$ for all $j$, in other
words a lattice point that can be visited by one of our lattice paths, can be identified with an element
$I(p)=(i_1,\ldots, i_k)\in {\mathcal P}(M_\alpha)$, a subset of the multiset $M_\alpha$. Hence any such lattice
point $p$ can be mapped to a unique monomial $y_{I_p}$ in the coordinate ring $T_\alpha$ of a general Segre embedding, 
compare~\ref{sec:Talpha}. 

Given a word $w\in W_\alpha$ in the multiset $M_\alpha$,
%a lattice path $\ell \in {\mathcal L}_\alpha$
we can finally define an element
\[
y_w = \prod_{p\in {\rm Des}(\delta(w))} y_{I_p}\in T_\alpha.
\]
Looking at the family $\CP_\alpha$ discussed in Theorem~\ref{thm:Palpha_def}, we can consider the corresponding element
\[\mathbf{a}_w=[y_w]_{0,0}\in  (\CP_\alpha)_{(0,0)} \cong P_\alpha\] in the central fibre, the Artinian truncation $P_\alpha$ of $T_\alpha$, homogeneous of bidegree $(\des(w), \maj(w))$. We can also consider the deformed element
\[\mathbf{b}_w= [y_w]_{1,0}\in  (\CP_\alpha)_{(1,0)}\cong R_\alpha\]
in the partial coinvariant algebra, homogeneous of degree $\maj(w)$.

\begin{theorem}\!\!\! \cite[Thm.~5.2]{ORS} \ \label{thm_bases} 
The set of elements
\[\{ \mathbf{a}_w: w \in W_\alpha\}\subset P_\alpha
\]
is a bigraded basis of the finite-dimensional bigraded algebra $P_\alpha$, witnessing the Hilbert series identity~\eqref{eq:Patqdim}. 
\end{theorem}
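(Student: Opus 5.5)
Since the elements $\mathbf{a}_w$ for $w\in W_\alpha$ are homogeneous of bidegree $(\des(w),\maj(w))$, and Corollary~\ref{cor:Palpha} gives $\dim_{t,q}P_\alpha=A_\alpha(t,q)$, it suffices to prove one of the following two statements:
\begin{enumerate}
\item[(a)] the $\mathbf{a}_w$ span $P_\alpha$;
\item[(b)] the $\mathbf{a}_w$ are linearly independent.
\end{enumerate}
Either one forces the $\mathbf{a}_w$ to be a basis, because the number of elements in each bidegree then matches the dimension of that bidegree piece. I plan to prove spanning, via a straightening argument in the Segre ring $T_\alpha$ followed by a reduction modulo the regular sequence $\tf_0,\ldots,\tf_n$.

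\textbf{Step 1: a standard monomial basis of $T_\alpha$.} The ring $T_\alpha$ is the Hibi-type algebra of the distributive lattice ${\mathcal P}(M_\alpha)$, ordered componentwise. It has a standard monomial basis given by products $y_{I_1}y_{I_2}\cdots y_{I_r}$ over multichains $I_1\le I_2\le\cdots\le I_r$. The defining relations $y_{I}y_{J}=y_{I\wedge J}y_{I\vee J}$ reduce any monomial to a chain, where $\wedge,\vee$ are the componentwise min and max. This also recovers Theorem~\ref{thm:bigradedPa} at the level of counting, so it is consistent with the Hilbert series.

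\textbf{Step 2: encode chains by words.} Following the lattice-path picture of Lemma~\ref{lemma_paths}, I will attach to each multichain a word $w\in W_\alpha$ together with a multiplicity vector $(c_0,c_1,\ldots,c_n)$. The rule is that the chain passes through the points of $\delta(w)$; descent points must appear with positive multiplicity, and each point $\ell(i)$ may be repeated $c_i$ times. This is the standard bijective proof of MacMahon's identity~\eqref{eq:mmcarlitz}: chains in dilates correspond to $(w,\text{weak sequence})$ pairs, and the factor $\prod_j(1-tq^j)^{-1}$ accounts for the free multiplicities. Under this bijection a chain monomial equals $y_w\cdot\prod_i y_{\ell(i)}^{c_i}$, where $\ell(i)$ runs over the points of $\delta(w)$ and $y_{\ell(i)}$ has bidegree $(1,i)$.

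\textbf{Step 3: reduce modulo $\tf_0,\ldots,\tf_n$.} Each extra factor $y_{\ell(i)}$ has bidegree $(1,i)$, the same as $\tf_i$. I will show by induction on a suitable term order that
\[
y_w\,y_{\ell(i)}\equiv(\text{combination of chain monomials that are larger in the order})\pmod{\tf_i}.
\]
The idea is to substitute $y_{\ell(i)}=\tf_i-\sum_{|I|=i,\,I\neq\ell(i)}y_I$, multiply out, and straighten each resulting product. The term order should be a Garsia--Stanton/Hibi descent-type order, so that every other term becomes strictly larger and the induction terminates. At the end, every chain monomial is congruent modulo $\langle\tf_0,\ldots,\tf_n\rangle$ to a combination of the bare $y_w$. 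This gives spanning, and the dimension count finishes the proof.

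I expect Step 3 to be the main obstacle. The difficulty is choosing a term order under which straightening $y_w\,y_I$ for $|I|=i$ and $I\ne\ell(i)$ always produces monomials that strictly exceed $y_w\,y_{\ell(i)}$. My first attempt would be a lexicographic order on sorted chains, comparing lattice paths by how late they take high-index steps. The case $\alpha=(1^n)$ can serve as a guide, since there it is the known Boolean-lattice argument of~\cite{BO}.
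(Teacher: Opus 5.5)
The paper gives no argument of its own here; it only cites [ORS, Thm.~5.2]. So the question is whether your proposal stands on its own. Your reduction to spanning is correct: it uses Corollary~\ref{cor:Palpha} and the fact that $\mathbf{a}_w$ has bidegree $(\des(w),\maj(w))$. The multichain basis of $T_\alpha$ and the encoding of a multichain as $y_w\prod_i y_{\ell(i)}^{c_i}$ are also correct.

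\textbf{The gap.} Step~3 carries the whole content of the theorem, and it is only asserted. Worse, the kind of order you propose cannot work: one that compares the underlying lattice paths (words) in a descent/lexicographic fashion. Straightening can move a chain onto an \emph{earlier} path. Here is an example.
\begin{itemize}
\item Take $\alpha=(1^4)$ and $w=1324$. Then $\Des(w)=\{2\}$, $\ell(2)=\{1,3\}$ and $y_w=x_{\{1,3\}}$.
\item Consider the non-bare chain monomial $x_{\{1,3\}}^2=y_w\,y_{\ell(2)}$.
\item Your substitution $x_{\{1,3\}}=\te_2-\sum_{|I|=2,\,I\neq\{1,3\}}x_I$ produces, among others, the term $-x_{\{1,3\}}x_{\{1,2\}}=-x_{\{1\}}x_{\{1,2,3\}}$.
\item This chain monomial is not divisible by $x_\emptyset$ or $x_{[4]}$. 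Its associated word is the identity $1234$: it equals $y_{1234}\,x_{\{1\}}x_{\{1,2,3\}}$ with $y_{1234}=1$.
\end{itemize}
The identity has no descents. So it is the smallest path in your lexicographic order, and it comes first in any shelling whose restriction faces are the descent sets. Those are the only shellings relevant to the elements $y_w$. Hence ``every other term becomes strictly larger'' is false for any order that only sees the path, and the downward induction does not get started.

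\textbf{The missing ingredient.} You need a statistic that straightening strictly increases, used before the path comparison.
\begin{itemize}
\item For incomparable $I,J\in{\mathcal P}(M_\alpha)$ we have $|I\wedge J|+|I\vee J|=|I|+|J|$ and $|I\wedge J|<\min(|I|,|J|)$. Hence $|I\wedge J|^2+|I\vee J|^2>|I|^2+|J|^2$.
\item Write $m=m'y_{\ell(i)}$ and take $|I|=i$. Set $\nu(y_{I_1}\cdots y_{I_r})=\sum_j|I_j|^2$.
\item If $I$ is incomparable with some factor of $m'$, straightening $m'y_I$ strictly increases $\nu$. In the example, $\nu$ goes from $8$ to $10$.
\item If $I$ is comparable with every factor, $\nu$ is unchanged. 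The new chain contains the descent points of $w$ but is not contained in the path of $w$. By the restriction-face property of the lexicographic shelling of ${\mathcal P}(M_\alpha)$, its word comes strictly after $w$.
\end{itemize}
So order chain monomials of fixed bidegree first by $\nu$, then by the shelling position of the word. With this order your induction is valid. This is the same dominance-type device that Garsia--Stanton use for $R_n$.

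Equivalently, you can argue by degeneration:
\begin{itemize}
\item The weight $I\mapsto -|I|^2$ is constant on rank levels. It gives a Gr\"obner degeneration of $T_\alpha$ to the Stanley--Reisner ring of the order complex of ${\mathcal P}(M_\alpha)$, in which the $\tf_l$ stay homogeneous.
\item In that ring the $\tf_l$ are the rank-coloured linear system of parameters. Garsia's theorem for balanced shellable complexes then gives the basis $\{y_w\}$.
\item Spanning is an open condition, and the torus action identifies all nonzero fibres with $P_\alpha$. So the basis transfers to $P_\alpha$.
\end{itemize}
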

The case $\alpha=(1^n)$ of this result, corresponding to the projective coinvariant algebra itself, was proved independently in \cite[Thm.~4.1]{BO}.

\begin{corollary}
The set of elements
\[\{ \mathbf{b}_w : w \in W_\alpha\}\subset R_\alpha
\]
is a graded basis of the finite-dimensional graded algebra $R_\alpha$, witnessing the Hilbert series identity~\eqref{eq:dimqRalpha}. 
\end{corollary}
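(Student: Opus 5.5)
The plan is to deduce the corollary from Theorem~\ref{thm_bases} by upper semicontinuity along the one-parameter subfamily of $\CP_\alpha$ over the line $\A^1\times\{0\}\subset\A^2$. Restricted to this line, $\CP_\alpha$ is a flat family of finite-dimensional algebras of constant rank ${n\choose\alpha}$. It carries a $\C^*$-action coming from the subtorus $\{t_0=1\}$ together with the first torus factor. The central fibre is $P_\alpha$ with its $q$-grading, and the fibre over $(1,0)$ is $R_\alpha$ by Theorem~\ref{thm:Palpha_def}(iii).

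First, I would lift the elements. Each $y_w$ is a product of generators $y_{I_p}$ of $T_\alpha$. Via the isomorphism $\chi_\alpha\colon T_\alpha\cong T_n^{S_\alpha}$, each $y_w$ gives an element of $T_n^{S_\alpha}[s_0,s_1]$ that does not involve $s_0,s_1$. It therefore defines a section of the family $\CP_\alpha$, whose values at $(0,0)$ and $(1,0)$ are $\mathbf{a}_w$ and $\mathbf{b}_w$ respectively.

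Next, I would show these sections span every fibre over the line near the origin. Restricted to the line, $\CP_\alpha|_{\A^1\times\{0\}}$ is a finitely generated free $\C[s_0]$-module of rank ${n\choose\alpha}$. Consider the $\C[s_0]$-linear map $\C[s_0]^{W_\alpha}\to \CP_\alpha|_{\A^1\times\{0\}}$ sending basis vectors to the sections $y_w$. Its determinant with respect to a $\C[s_0]$-basis is a polynomial in $s_0$. This polynomial is nonzero at $s_0=0$ by Theorem~\ref{thm_bases}, since the $\mathbf{a}_w$ form a basis of $P_\alpha$. Hence it is nonzero at all $s_0$ in a neighbourhood of $0$.

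Then I would use the $\C^*$-action to move to $(1,0)$. The action rescales $s_0$ and maps the section $y_w$ to a nonzero scalar multiple of itself, since $y_w$ is bihomogeneous. The determinant is therefore a semi-invariant polynomial in $s_0$, that is, a monomial $c\,s_0^m$. Nonvanishing at $0$ forces $m=0$, so it is a nonzero constant. Consequently the $\mathbf{b}_w$ form a basis of $(\CP_\alpha)_{(1,0)}\cong R_\alpha$.

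Finally, I would check the grading. The degree of $\mathbf{b}_w$ with respect to the grading on $R_\alpha$ induced by $\{t_0=1\}$ is the $q$-degree of $y_w$, namely $\maj(w)$. This witnesses $\dim_q R_\alpha=A_\alpha(1,q)$, the $q$-multinomial coefficient~\eqref{eq:dimqRalpha}.

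The main obstacle is the bookkeeping in the last two steps. One has to verify that the isomorphism $(\CP_\alpha)_{(1,0)}\cong R_\alpha$ and the torus action are compatible, so that the section $y_w$ is genuinely an eigenvector. One also has to confirm that the grading on $R_\alpha$ is the second grading, so that degrees are $\maj(w)$. I would settle both by tracing generators through the proof of Theorem~\ref{thm_main_def}, where $x_{I}\mapsto\prod_{i\in I}u_i$.
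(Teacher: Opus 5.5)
Your proposal is correct and follows the paper's approach. The paper proves the corollary in one line: being a basis is an open condition in the flat family $\CP_\alpha$, and this is propagated from the central fibre $P_\alpha$ to the fibre $R_\alpha$ over $(1,0)$, with the torus action used implicitly as in the proof of Theorem~\ref{thm_main_def}. Your determinant argument on the line $s_1=0$, showing the determinant is a torus semi-invariant and hence a nonzero constant, is an explicit version of that openness-plus-torus step.
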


\begin{proof} This follows from Theorem~\ref{thm_bases} together
with Theorem~\ref{thm:Palpha_def}, 
as being a basis in a flat family of finite-dimensional algebras is an open property. 
\end{proof}

\begin{example} Consider the case $\alpha=(1^n)$. In this case, elements of the basis in Theorem~\ref{thm_bases} are simply indexed by permutations, identified under Lemma~\ref{lemma_paths} with the set of increasing lattice paths in the unit hypercube. 
Checking through the definitions, the basis element $\mathbf{a}_\sigma\in P_n$ is the image under the quotient map $T_n\to P_n$ of the monomial
\[\prod_{i\in\Des(\sigma)} x_{\{\sigma(1),\ldots, \sigma(i)\}}\in T_n,  
\]
of bidegree $(\des(\sigma), \maj(\sigma))$.
%where $q_n\colon T_n\to P_n$ is the quotient map defining the projective coinvariant algebra.  
In the deformation $R_n$ of $P_n$, the corresponding element $\mathbf{b}_\sigma\in R_n$ is the image under the quotient map $\C[u_1,\ldots, u_n]\to R_n$ of the monomial
\[
\prod_{i\in\Des(\sigma)} \prod_{j=1}^i u_{\sigma(i)}\in \C[u_1,\ldots, u_n],
\]
of degree $\maj(\sigma)$. These elements form the standard Garcia--Stanton descent basis~\cite{GS} of $R_n$. 
\end{example}

For arbitrary $\alpha\Vdash n$, the Garcia--Stanton-style basis $\{\mathbf{b}_w : w \in W_\alpha\}$ of the partial coinvariant algebra~$R_\alpha$, indexed by words in a multiset, or equivalently, by Lemma~\ref{lemma_paths}, by increasing lattice paths, may be new. Starting from a lattice path, the basis elements can be expressed easily in the generating set of Proposition~\ref{prop:cohom}. Instead of writing long expressions, we restrict to a final example. 

\begin{example} Take $\alpha=(2,2)\vDash 4$, continuing Example~\ref{ex22}. Once again replacing $\{u_1^{(1)}, u_1^{(2)}, u_2^{(1)}, u_2^{(2)}\}$ with the more readable $\{u_1, u_2, v_1, v_2\}$, we have
\[\begin{array}{rcl} R_{(2,2)} & \cong & \C[u_1, u_2, v_1,v_2] / \left\langle u_1 + v_1, u_2+u_1v_1+v_2, u_2v_1+u_1v_2, u_2v_2 \right\rangle \\
& \cong & \C[u_1, u_2]/\left\langle u_1^3-2u_1u_2, u_1^2u_2-u_2^2\right\rangle,
\end{array}\]
a graded algebra with $\deg(u_i)=i$, $\deg(v_i)=i$ and $q$-dimension $1+q+2q^2 + q^3+q^4$,
isomorphic to the cohomology algebra $H^*({\rm Gr}(2,4), \C)$.
The set of words 
\[W_\alpha = \{1122, 1212, 1221, 2112, 2121, 2211\}\]
gives rise to the graded $\C$-basis
\[\{1, u_1v_1, u_1v_2, v_1, u_1v_1v_2, v_2\}\subset R_{(2,2)};\]
for each lattice path, the rule is to multiply monomials at the descent positions, as in Figure~1. In terms of the second presentation, this becomes
\[ \{1, -u_1^2, u_1u_2, -u_1,-u_2^2, u_1^2-u_2\}\subset R_{(2,2)}.\]
Even in this simple case, and disregarding signs, this basis is different both from the Schubert basis $\{1,u_1, u_1^2-u_2, u_2, u_1u_2, u_2^2\}$ and the standard monomial basis $\{1,u_1, u_1^2, u_2, u_1u_2, u_2^2\}$ of the algebra $R_{(2,2)}\cong H^*({\rm Gr}(2,4), \C)$.

%\[\begin{array}{rcl} R_{(2,2)} & \cong & \C[u_1, u_2, v_1,v_2] / \left\langle u_1 + v_1, u_2+u_1v_1+v_2, u_2v_1+u_1v_2, u_2v_2 \right\rangle \\
%& \cong & \C[v_1, v_2]/\left\langle v_1^3-2v_1v_2, v_1^2v_2-v_2^2\right\rangle,
%\end{array}\]
%\[ \{1, -v_1^2, -v_1v_2, v_1, v_2^2, v_2\}\subset R_{(2,2)}.\]

\begin{figure}[h]
\begin{center}
    \begin{tikzpicture}[scale=1.5, font=\footnotesize, fill=black!20]
	\draw (0,0) -- (2,0);
	\draw (0,1) -- (2,1);
	\draw (0,2) -- (2,2);
    \draw (0,0) -- (0,2);
	\draw (1,0) -- (1,2);
	\draw (2,0) -- (2,2);
    \draw[line width=2.5pt] (0,0) -- (0,1);
    \draw[line width=2.5pt] (0,1) -- (1,1);
    \draw[line width=2.5pt] (1,1) -- (1,2);
    \draw[line width=2.5pt] (1,2) -- (2,2);
    \draw[dotted, line width=2.5pt] (0,0) -- (1,0);
    \draw[dotted, line width=2.5pt] (1,0) -- (1,1);
    \draw[dotted, line width=2.5pt] (1,1) -- (2,1);
    \draw[dotted, line width=2.5pt] (2,1) -- (2,2);    
    \draw (-0.12,-0.12) node {$1$};
    \draw (0.85,-0.15) node {$u_1$};
    \draw (1.85, -0.15) node {$u_2$};
    \draw (-0.15,0.85) node {$v_1$};
    \draw (0.75,0.85) node {$u_1v_1$};
    \draw (1.75,0.85) node {$u_2v_1$};
    \draw (-0.15,1.85) node {$v_2$};
    \draw (0.75,1.85) node {$u_1v_2$};
    \draw (1.75,1.85) node {$u_2v_2$};
    \fill[black] (0,1) circle [radius=.1cm];
   \fill[black] (1,2) circle [radius=.1cm];
   \draw[dotted, line width=2.5pt] (1,1) circle [radius=.1cm];
    \end{tikzpicture}
     \caption{The lattice paths corresponding to the words $w=1212$ (dotted line) and $w=2121$ (solid line), with their descent positions marked, give rise to the basis elements $u_1v_1$ and $v_1\cdot u_1v_2=u_1v_1v_2$}
\end{center}\end{figure}
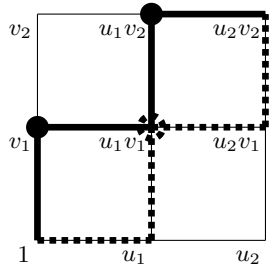
\end{example}

%More generally, for  $\alpha=(n-1,1)\vDash n$, we have $F_\alpha\cong \P^{n-1}$, while 
%\[\dim_{t,q} P_\alpha = 1 + t(q + q^2 + \ldots + q^{n-1}) \]
%specialises to 
%\[\dim_{q} R_\alpha = \dim_{q}H^*(\P^{n-1}, \C) = 1 + q + \ldots + q^{n-1}. \]

\end{document}